\newtheorem{theorem}{Theorem}
\newtheorem{corollary}[theorem]{Corollary}
\newtheorem{lemma}[theorem]{Lemma}
\newtheorem{proposition}[theorem]{Proposition}
\title{A Combinatorial Algorithm for Computing Higher Order Linking Numbers}
\author{Chun-Chung Hsieh,  Louis Kauffman, {\it and} Chichen M. Tsau\\{Institute of Mathematics Academia Sinica}, Department of Mathematics  University of Illinois \\at Chicago, {\it and} Department of Mathematics and Computer Science 
Saint Louis University\\{\it cchsieh@math.sinica.edu.tw}, {\it Kauffman@uic.edu}, and 
{\it tsaumc@slu.edu}}
\date{}                                           
\begin{document}
\maketitle

\begin{abstract} 
We develop the intersection theory at relative chain-cochain level, and apply it along with the use of Seifert disks for an oriented link to give a combinatorial algorithm to compute Massey's  higher order linking numbers.\end{abstract}

\section{Introduction} 

It is well-known that there are many different methods of computing the linking number of an oriented 2-component link in $S^3$ (see for example \cite[pp.132--135]{rolfsen}), and perhaps the simplest is the  combinatorial formula which counts the number of signed crossings of one component going under another component in a diagram of the link.  It is much more subtle to compute higher-order linking numbers, and it has been a folklore to use the intersection theory in the process, which was first suggested by W. Massey.  In \cite{massey 60} Massey introduced the higher-order linking as an application of his higher-order cohomology operations defined in terms of suitable cochains \cite{massey 58}, and he calculated the third-order linking numbers for some 3-component links by  shifting from cohomology and cup product to homology and intersection theory via duality theorems for manifolds.  Later several works along this direction (\cite{cochran},\cite{stein},  \cite{hain 85})  gave methods in various forms for computing the higher-order linking numbers, and others computed Milnor's $\bar \mu$ -invariants (e.g. \cite{mellor}) by using their well-known connection to Massey invariants (\cite{porter},\cite{turaev},\cite{fenn}); but a formal derivation of the general formulae in intersection theory which are used for the computation has not yet been given, and a concrete algorithm for the computation is hence lacking.  In this paper we will complete Massey's original approach by developing systematically  the intersection theory at the relative chain-cochain level in the simplicial category, and use it to derive recursive combinatorial formulae for computing all higher-order linking numbers.  The formulae are algorithmic in the sense that the computation of an $n$-th order linking requires the construction of certain surfaces from the assumption that the $(n-1)$-st order linking being 0.  From a diagram of a link, these formulae give rise to a combinatorial algorithm for computing higher-order linkings by using Seifert disks of the link to facilitate the construction of the intersection of Seifert surfaces of the link, which is necessary for the inductive step.    
 
\phantom{.}  

The paper is organized into 5 sections.  Section 2 gives preliminaries for the definition and results in section 3 on the intersection product at the relative chain-cochain level.  In section 4 explicit formulas for the Massey higher-order linking numbers in terms of intersection product of relative chains are presented, and a geometric topology interpretation for the relative chains and their intersection product is given.  A combinatorial algorithm for computing Massey third-order linking number is given in section 5, and to demonstrate its use we apply the algorithm on two examples.

\section{Preliminaries}
  
We work in the piecewise linear category.   Most material in this section is in the area of combinatorial topology and can be found in \cite{lefschetz}, \cite{hilton} and \cite{whitehead}.  Let $M$ be a closed oriented $n$-manifold and $N$ a closed submanifold of $M$, and let $(K,L)$ be a fixed triangulation of $(M,N)$ with a given ordering of the vertices (this means a partial ordering of the vertices of $K$ such that the vertices of each simplex are totally ordered).  Consider the barycentric subdivision $(K',L')$ of $(K,L)$, and for a $p$-simplex $\sigma$ of $K$, denote by $D(\sigma)$ the dual $(n-p)$-cell of $\sigma$ with orientation given by the transverse orientation of $\sigma$.  Explicitly, $D(\sigma)$ is the subcomplex of $K'$ whose vertices are barycenters of simplexes of $K$ having $\sigma$ as a face.  The collection $K^*=\{D(\sigma) |  \phantom{.} \sigma$ is a simplex of $K\}$ forms a cell decomposition (or block decomposition) of $K'$.  Note that $L^*=N(L')$ as sub-complexes of $K'$, the latter being the regular neighborhood of $L'$ in $K'$.  Clearly $L^*$ is a sub-cell-complex of $K^*$.  Let $C_q(K^*) ( (C_q(L^*)$ respectively) be the free abelian group generated by the $q$-cells $D(\sigma)$ of $K^*$ ($L^*$, respectively). The boundary operator $\partial : C_q(K^*) \to C_{q-1}(K^*)$ is defined by setting $\partial D(\sigma)$ to be the subcomplex of $K'$ whose vertices are barycenters of simplexes of $K$ having $\sigma$ as a proper face.  If $\sigma$ is a $p$-simplex of $K$, then $D(\sigma)$ is an $(n-p)$-cell in $K^*$, and $\partial D(\sigma)$ is a union of $D(\tau)$ for some $(p+1)$-simplex $\tau$ of $K$, so $\partial D(\sigma) \phantom{.}\epsilon \phantom{.}C_{n-p-1}(K^*)$ is a sum of $(n-p-1)$-cells in $K^*$.  If $a = \sum a_i \sigma_i \phantom{.}\epsilon \phantom{.} C_p(K)$, then let $D(a) = \sum a_i D(\sigma_i) \phantom{.}\epsilon \phantom{.} C_{n-p}(K^*)$.  We have the chain complex $\{C_q(K^*),\partial \}$ and the sub-chain complex $ \{C_q(L^*),\partial \}$, and the boundary operator $\partial$ induces the boundary operator on the relative chain complex $\overline \partial : C_q(K^*, L^*) \to C_{q-1}(K^*,L^*)$.  Recall that the subdivision operator $sd : (K,L) \rightarrow (K',L')$ induces a chain map $sd_\# :  C_p(K,L) \rightarrow C_p(K',L')$,  and a simplicial map $\theta : (K',L') \to (K,L)$ which gives rise to a chain homotopy inverse $\theta_\# : C_p(K',L') \to C_p(K,L)$ of $sd_\#$ is defined as follows.  For any vertex $b$ of $K'$, $b$ is the barycenter of a unique simplex $\sigma$ of $K$.  Define $\theta(b)$ to be an arbitrary chosen vertex of $\sigma$, and extend $\theta$ piecewise linearly over $K'$.  We shall use the convention that $\theta(b)$ is the last vertex in the ordering of vertices of $\sigma$.  It is clear that  $\theta_\# \circ sd_\# = id$ on $C_p(K,L)$, and dually we have the cochain maps on relative cochain complexes $sd^ \# :  C^p(K',L') \rightarrow C^p(K,L)$ and $\theta ^ \# : C^p(K,L) \to C^p(K',L')$,  and we have $  sd^\# \circ \theta^\# = id$ on $C^p(K,L)$.

Consider first the absolute case where $L = \phi$.  For a $p$-simplex $\sigma$ of $K$, let $u_\sigma \phantom{.} \epsilon \phantom{.} C^p(K)$ be the dual of $\sigma$ satisfying 

\begin{equation} \label{product}
u_\sigma(\tau) = \left\{   
\begin{array} {cl}
1 & \mathrm{if} \quad \tau = \sigma, \\
0 &\mathrm{otherwise}.
\end{array}
 \right.
 \end {equation}
for any simplex $\tau$ of $K$.  Then $  B = \{u_\sigma\phantom{.} \big | \phantom{.} \sigma$ is a $p$-simplex of $K\}$ is a basis for $C^p(K)$.  Let $\xi \phantom{.} \epsilon \phantom{.} Z_n(K)$ be the $n$-cycle representing the fundamental class of $K$.  It is known \cite{whitehead} that $sd_\#(\xi) \cap \theta^\#(u_\sigma) = D(\sigma)$ for any simplex $\sigma$ of $K$, here $D(\sigma)$ is considered as an element of $C_{n-p}(K')$, and the map $\phi : C^p(K) \to C_{n-p}(K^*)$ given by $\phi(u_\sigma)= D(\sigma)$ and extends linearly is an isomorphism satisfying $\phi(\delta \mu_\sigma)= (-1)^{p+1}\partial D(\sigma)$, from which follows the Poincare duality.   In \cite[p. 508]{whitehead} , the intersection product is defined  to be the pairing $: C_p(K) \otimes C_q(K^*) \overset  \bullet \to C_{p+q-n}(K')$ (with $p+q \ge n$) given by the composition 

\begin{eqnarray*}
C_p(K) \otimes C_q(K^*)& \overset {1\otimes \phi^{-1}}  \longrightarrow&  C_p(K) \otimes C^{n-q}(K) 
\overset {sd_\# \otimes 1} \longrightarrow C_p(K') \otimes C^{n-q}(K)\\ 
&\overset {1\otimes \theta^\#}  \longrightarrow& C_p(K') \otimes C^{n-q}(K') 
 \overset \cap \longrightarrow  C_{p+q-n}(K ').
\end{eqnarray*}

\phantom{.}

\emph{Remark}: It follows from the definition that for any $p$-simplex $\sigma$ and any $(n-q)$-simplex $\tau$ of $K$,  $\sigma \cdot D(\tau)= sd_\#(\sigma) \cap \theta^\#(u_\tau) $, and by the topological definition of the cap product (see, for example [1], page 204), that $\sigma \cdot D(\tau)$ is an $(n-p)$-simplex in $K'$ whose underlying set  $|\sigma \cdot D(\tau)|$ satisfies $|\sigma \cdot D(\tau)| \subset |sd_\#(\sigma)|$, and $|\sigma \cdot D(\tau)| = |sd_\#(\sigma) \cap \theta^\#(u_\tau)| \subset |sd_\#(\xi ) \cap \theta^\#(u_\tau)| = |D(\tau)|$, so $|\sigma \cdot D(\tau)|$ is contained in the intersection of $|sd_\#(\sigma)|$ and $|D(\tau)|$.  Conversely, given a simplex $\eta$ of $K'$ such that $|\eta| \subset  |sd_\#(\sigma)| \cap |D(\tau)|$, we have $|\eta|\phantom{.}\subset \phantom{.} |D(\tau)| = |sd_\#(\xi) \cap \theta^\#(u_\tau)| = |[ sd_\#(\xi)\lambda_p, \theta^\#(u_\tau)](sd_\#(\xi)\rho_{n-p})|$, and since  $|\eta| \phantom{.} \subset \phantom{.} |sd_\#(\sigma)|$, it follows that $|\eta| \phantom{.} \subset  \phantom{.} |[ sd_\#(\sigma)\lambda_p, \theta^\#(u_\tau)](sd_\#(\sigma)\rho_{n-p})| =  |sd_\#(\sigma) \cap \theta^\#(u_\tau) = |\sigma \cdot D(\tau)|$.  Thus  $|\sigma \cdot D(\tau)|$ is equal to the intersection of  $|sd_\#(\sigma)| = |\sigma|$ and $|D(\tau)|$, which justifies the "intersection" for the pairing defined.
  An example for the case $n=3, p=2,$ and $q=2$ is given in the Appendix, which will be used later for the combinatorial algorithm in section 5 to "see" the sense of direction of the intersection curve of two Seifert disks.
    
\phantom{.}

It follows from the fact (see, e.g. \cite{whitehead}) that for $ a \phantom{.} \epsilon \phantom{.} C_p(K)$ and $b \phantom{.} \epsilon \phantom{.} C_q(K^*)$, 

\begin{eqnarray*}
\partial ( a \cdot b ) = (-1)^{n-q} (\partial a) \cdot b + (-1)^{n+1} a \cdot (\partial b), 
\end{eqnarray*}
 there is an induced intersection product on the homology classes :  For $[a] \phantom{.} \epsilon \phantom{.} H_p(K)$ and $[b] \phantom{.} \epsilon \phantom{.} H_q(K^*)$, $[a] \cdot [b] = [ a \cdot b ]\phantom{.} \epsilon \phantom{.} H_{p+q-n}(K')$.
 
 \phantom{.}
 
 We will use the following fact of the functional property of cap product (see, e.g. \cite{greenberg})
 
 \begin{lemma}
 For any map $f : X \to Y$, and any $a \phantom{.}\epsilon \phantom{.}C_{p+q}(X)$ and $b \phantom{.}\epsilon \phantom{.}C^p(Y)$, 
 
 \begin{eqnarray*}
 f_\#(a \cap f^\#(b)) = f_\#(a) \cap b.
 \end{eqnarray*}  
 
The same formula holds for $f : (X,A) \to (Y,B)$, and any $a \phantom{.}\epsilon \phantom{.}C_{p+q}(X,A)$ and $b \phantom{.}\epsilon \phantom{.}C^p(Y,B)$.
\end{lemma}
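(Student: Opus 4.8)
The plan is to verify the identity on a single generator and then extend by bilinearity, since both sides are bilinear in $a$ and $b$. So I would first reduce to the case $a=\sigma$, a single singular $(p+q)$-simplex $\sigma:\Delta^{p+q}\to X$, with $b$ an arbitrary element of $C^p(Y)$. The only ingredient is the front-face/back-face description of the cap product already used implicitly in the Remark above: writing $\sigma\lambda_p$ for the front $p$-face and $\sigma\rho_q$ for the back $q$-face of $\sigma$ (the restrictions of $\sigma$ to the first $p+1$ and the last $q+1$ vertices of $\Delta^{p+q}$, respectively), one has $\sigma\cap c = c(\sigma\lambda_p)\,(\sigma\rho_q)$ for any $c\in C^p(X)$. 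I would then simply unwind both sides against this formula.

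The computation is short. Using $f^\#(b)(\sigma\lambda_p)=b\big(f_\#(\sigma\lambda_p)\big)$ on the left, and expanding the cap product on the right,
\begin{eqnarray*}
f_\#(\sigma\cap f^\#(b)) &=& f_\#\big( f^\#(b)(\sigma\lambda_p)\,(\sigma\rho_q)\big) \;=\; b\big(f_\#(\sigma\lambda_p)\big)\, f_\#(\sigma\rho_q),\\
f_\#(\sigma)\cap b &=& b\big((f_\#\sigma)\lambda_p\big)\,\big((f_\#\sigma)\rho_q\big) \;=\; b\big(f_\#(\sigma\lambda_p)\big)\, f_\#(\sigma\rho_q).
\end{eqnarray*}
The two right-hand sides agree, which proves the absolute case. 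The one structural fact being invoked is that the face operators commute with the induced chain map, $(f_\#\sigma)\lambda_p=f_\#(\sigma\lambda_p)$ and $(f_\#\sigma)\rho_q=f_\#(\sigma\rho_q)$; this holds because $f_\#$ is post-composition with $f$ while $\lambda_p,\rho_q$ are pre-composition with the standard inclusions of the front and back faces of $\Delta^{p+q}$, so the identities are just associativity of composition.

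For the relative statement I would observe that, since $f$ is a map of pairs and hence $f(A)\subset B$, the map $f_\#$ carries $C_*(A)$ into $C_*(B)$ and $f^\#$ carries $C^*(Y,B)$ into $C^*(X,A)$; thus all four maps descend to the relative groups, and the absolute identity passes verbatim to the quotients. The only point to record is that the relative cap product is well defined on these quotients, but that is exactly the compatibility built into its definition, and the computation above requires no modification.

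I do not expect a genuine obstacle here: the content is entirely bookkeeping of conventions. The one place to be careful is to keep the front/back-face convention used in defining $\cap$ (cochain evaluated on the front $p$-face, as in the Remark's expression $[\,sd_\#(\xi)\lambda_p,\theta^\#(u_\tau)\,](sd_\#(\xi)\rho_{n-p})$) consistent with the conventions under which $f_\#$ and $f^\#$ were set up, so that the step $f^\#(b)(\sigma\lambda_p)=b(f_\#(\sigma\lambda_p))$ lines up without an intervening sign; once the conventions match, the identity is immediate.
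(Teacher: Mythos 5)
Your proof is correct, but there is nothing in the paper to compare it against: the paper does not prove this lemma at all, it quotes it as a known ``functional property of cap product'' with a citation to Greenberg's textbook. What you have written is, in effect, the standard textbook verification that the citation outsources: reduce by bilinearity to a single singular simplex, use the front-face/back-face formula $\sigma \cap c = c(\sigma\lambda_p)\,(\sigma\rho_q)$, and note that $f_\#$ (post-composition with $f$) commutes with $\lambda_p,\rho_q$ (pre-composition with the standard face inclusions), so both sides equal $b\bigl(f_\#(\sigma\lambda_p)\bigr)\,f_\#(\sigma\rho_q)$; the relative case then descends because $f(A)\subset B$ makes all four maps compatible with subcomplexes and annihilators, and the relative cap product is well defined since a cochain in $C^p(X,A)$ kills the front face of any simplex lying in $A$. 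Your argument is complete for singular chains, which is the natural reading of the statement's ``any map $f$'' and is the setting of the cited reference. One caveat is worth recording, since the paper actually applies the lemma in the PL/simplicial category: for oriented simplicial chains, $f_\#$ annihilates a simplex whose image degenerates, and then the identity $(f_\#\sigma)\lambda_p = f_\#(\sigma\lambda_p)$ is not automatic. If $f$ is order-preserving on vertices (as the paper's maps $\theta$, $sd_\#$, $r$, $i$ are, by the last-vertex convention), a collapse forces some consecutive pair of vertices to be identified, so the front or back face already degenerates and both sides vanish, and your computation goes through verbatim; for a non-monotone simplicial map the chain-level identity can genuinely fail, so that hypothesis is not cosmetic. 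This does not affect the correctness of your proof of the lemma as stated.
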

 
\section{Intersection product of relative chains}

Given the simplicial complex pair $(K,L)$, let $N(L)$ be a regular neighborhood of $L$ in $K$.  Every $g \phantom{.} \epsilon \phantom{.} C^p(K- \overset \circ N(L))$ extends to a unique $\overset \thicksim g \phantom{.} \epsilon \phantom{.} C^p(K)$ satisfying $\overset \thicksim  g(\triangle)=0$ for any $p$-simplex $\triangle$ whose support is contained in $N(L)$.  So we may consider $C^p(K - \overset \circ N(L))$ as a sub-cochain complex of $C^p(K)$, of $C^p(K,N(L))$, and of $C^p(K,L)$.  Considering $C^p(K - \overset \circ N(L))$ as a sub-cochain complex of $C^p(K)$, we have

\phantom{.}

\begin{theorem} \label{GBF1}
The map $\bar \phi = \phi \Big |_{C^p(K - \overset \circ N(L))} : C^p(K - \overset \circ N(L)) \to C_{n-p}(K^*,L^*)$ given by $\bar \phi(u_\sigma) = \overline {D(\sigma)}$ for any $p$-simplex $\sigma$ of $K - \overset \circ N(L)$, is an isomorphism satisfying $\overline \phi (\delta u_\sigma) = (-1)^{p+1}\phantom{.}
\overline \partial \phantom{.} \overline {D(\sigma)} \phantom{.} (= (-1)^{p+1}\phantom{.}\overline {\partial D(\sigma)})$.
\end{theorem}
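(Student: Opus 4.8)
The plan is to exhibit $\bar\phi$ as the composite of the absolute Poincar\'e-duality isomorphism $\phi$ with the natural projection $q\colon C_{n-p}(K^*)\to C_{n-p}(K^*,L^*)$ onto relative chains, and then to check the two assertions of the theorem separately: that this composite restricts to an isomorphism on the sub-cochain-complex $C^p(K-\overset\circ N(L))$, and that it intertwines $\delta$ with $(-1)^{p+1}\overline\partial$. Since $\phi(u_\sigma)=D(\sigma)$ and $\bar\phi(u_\sigma)=\overline{D(\sigma)}=q(D(\sigma))$, we have $\bar\phi=q\circ\phi\big|_{C^p(K-\overset\circ N(L))}$ by construction, so everything is reduced to understanding how $\phi$ behaves with respect to the two distinguished bases.

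For the isomorphism I would argue purely on the level of bases. The group $C_{n-p}(K^*,L^*)$ is free on the cosets $\overline{D(\sigma)}$ of those $(n-p)$-cells $D(\sigma)$ that are \emph{not} cells of $L^*$, since $\overline{D(\sigma)}=0$ precisely when $D(\sigma)\in L^*$. On the other side, $C^p(K-\overset\circ N(L))$ is free on those $u_\sigma$ whose simplex $\sigma$ is not absorbed into $N(L)$. Because $\phi$ is a bijection $u_\sigma\mapsto D(\sigma)$ between all $p$-simplices of $K$ and all $(n-p)$-cells of $K^*$, the whole statement comes down to the single combinatorial equivalence
\[
u_\sigma\ \text{generates}\ C^p(K-\overset\circ N(L))\quad\Longleftrightarrow\quad D(\sigma)\notin L^*.
\]
Granting this, $\bar\phi$ carries the distinguished basis of the domain bijectively onto the distinguished basis of the target (indexed by the same set of simplices $\sigma$), and is therefore an isomorphism. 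To prove the equivalence I would use the identification $L^*=N(L')$ together with the fact that the barycenter $\hat\sigma$ lies in the interior of the closed cell $D(\sigma)$: since $|L^*|$ is a union of closed dual cells, it contains the interior point $\hat\sigma$ iff it contains all of $D(\sigma)$, giving the barycenter criterion $D(\sigma)\subseteq|L^*|\iff\hat\sigma\in|N(L')|$. It then remains to match this with the defining vanishing condition for $C^p(K-\overset\circ N(L))$, i.e.\ to show that $\sigma$ survives into the sub-cochain-complex exactly when $\hat\sigma\notin|N(L')|$; here the fullness of $L$ in $K$ and the regular-neighborhood identity $|N(L)|=|N(L')|=|L^*|$ are what translate ``$\sigma$ not contained in $N(L)$'' into a condition on the single vertex $\hat\sigma$ of $K'$.

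The coboundary identity is then formal. The extension-by-zero description shows that $C^*(K-\overset\circ N(L))$ is closed under $\delta$ (if a cochain vanishes on every $p$-simplex supported in $N(L)$, its coboundary vanishes on every $(p+1)$-simplex supported in $N(L)$, since all faces of such a simplex are again supported in $N(L)$), so $\delta u_\sigma$ again lies in the domain and $\bar\phi(\delta u_\sigma)$ is defined. Applying $q$ to the absolute relation $\phi(\delta u_\sigma)=(-1)^{p+1}\partial D(\sigma)$ and using that $q$ is a chain map, i.e.\ $q\circ\partial=\overline\partial\circ q$, gives
\[
\bar\phi(\delta u_\sigma)=q\phi(\delta u_\sigma)=(-1)^{p+1}q(\partial D(\sigma))=(-1)^{p+1}\overline\partial\,\overline{D(\sigma)},
\]
which is the asserted formula, the identity $\overline\partial\,\overline{D(\sigma)}=\overline{\partial D(\sigma)}$ being exactly the chain-map property of $q$.

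I expect the main obstacle to be the combinatorial equivalence of the second paragraph, namely reconciling the defining condition of $C^p(K-\overset\circ N(L))$ (a vanishing condition on simplices sitting in $N(L)$) with the dual-cell condition $D(\sigma)\notin L^*$. This is where the precise choice of regular neighborhood enters: one must take $N(L)$ compatibly with the barycentric/dual-cell structure (equivalently, through $L^*=N(L')$), so that membership of a $K$-simplex in the neighborhood is detected by its barycenter, with fullness of $L$ making the translation clean. Once this bookkeeping is in place, both the isomorphism and the sign-compatibility follow at once from the absolute case.
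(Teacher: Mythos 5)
Your proposal is correct and follows essentially the same route as the paper: restrict the absolute duality isomorphism $\phi$, match the basis $\{u_\sigma\}$ of $C^p(K - \overset \circ N(L))$ with the dual cells $D(\sigma)$ lying outside $L^*$ (the paper phrases this via the splitting $C_{n-p}(K^*) = C_{n-p}(K^*-L^*) \oplus C_{n-p}(L^*)$ and the identification of the complementary summand with the quotient $C_{n-p}(K^*,L^*)$, which is the same device as your projection $q$), and let the sign identity descend from the absolute relation $\phi(\delta u_\sigma) = (-1)^{p+1}\partial D(\sigma)$. If anything you are more explicit than the paper, which simply asserts the generator correspondence ($\bigcup_\sigma D(\sigma) = K^*-L^*$) and leaves the coboundary compatibility and the $\delta$-closure of the domain implicit, whereas you flag the combinatorial equivalence as the point needing proof and verify the chain-map bookkeeping.
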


\begin{proof}  Since $\underset {\sigma} {\bigcup}D(\sigma)=K^* - L^*$, where the union ranges over all $p$-simplexes $\sigma$ of $K  - \overset \circ N(L)$, the collection $\{D(\sigma)| \phantom{.} \sigma $ is a $p$-simplex of $K - \overset \circ N(L) \}$ spans $C_{n-p}(K^* - L^*)$ freely.  Now $D(\sigma) = \phi(u_\sigma)$ and  $\phi : C^p(K) \to C_{n-p}(K^*)$ is an isomorphism, the restriction 

\begin{eqnarray*}
\bar \phi = \phi \Big |_{C^p(K - \overset \circ N(L))} : C^p(K - \overset \circ N(L)) \to C_{n-p}(K^* - L^*)
\end{eqnarray*} 
is an isomorphism.  Since 

\begin{eqnarray*}
C_{n-p}(K^*) = C_{n-p}(K^* - L^*) \oplus C_{n-p}(L^*),
\end{eqnarray*} 
the result follows from the natural identification 

\begin{eqnarray*}
C_{n-p}(K^* - L^*) \cong \displaystyle \frac {C_{n-p}(K^*)} {C_{n-p}(L^*)} = C_{n-p}(K^*, L^*),
\end{eqnarray*} 
in which $D(\sigma)$ in $C_{n-p}(K^* - L^*)$ is identified with $\overline {D(\sigma)}$ in $\displaystyle C_{n-p}(K^*, L^*)$.

\end{proof}

\phantom{.}

\emph{Remark}:  If $C_p(K - \overset \circ N(L))$ is considered as a sub-chain complex of $C_p(K,L)$, then for any $p$-simplex $\sigma$ of $K - \overset \circ N(L)$, $\bar \phi(\mu_{\sigma} )= sd_\#(\xi\Big |_{K - \overset \circ N(L)}) \cap \theta^\#(u_\sigma)$, where $\xi$ is the fundamental class of $K$.

\phantom{.}

Thus we have proved the following version of the Alexander duality:

\begin{corollary} \label{GBF1}The  isomorphism $\overline \phi$ induces an isomorphism (still denoted $\overline \phi$)

\begin{eqnarray*}
\overline \phi: H^p(K  - \overset \circ N(L)) \to H_{n-p}(K^*,L^*) .
\end{eqnarray*} 

\end{corollary}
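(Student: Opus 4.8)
The plan is to pass from the chain-level statement of the preceding theorem to homology via the standard principle that an isomorphism of (co)chain complexes induces an isomorphism on (co)homology; the only point requiring care is the sign $(-1)^{p+1}$ relating $\delta$ and $\overline\partial$. First I would record that the theorem furnishes, for each $p$, a group isomorphism $\bar\phi: C^p(K-\overset\circ N(L)) \to C_{n-p}(K^*,L^*)$ with
\begin{eqnarray*}
\bar\phi \circ \delta = (-1)^{p+1}\,\overline\partial \circ \bar\phi .
\end{eqnarray*}
Reindexing the cochain complex $\{C^p,\delta\}$ as a chain complex through $p \mapsto n-p$, this says exactly that $\bar\phi$ is a degreewise isomorphism intertwining the two differentials up to the sign $(-1)^{p+1}$.

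The core observation is that this sign-commutation already forces $\bar\phi$ to send cocycles to cycles and coboundaries to boundaries. Indeed, if $\delta u = 0$ then $(-1)^{p+1}\overline\partial\,\bar\phi(u) = \bar\phi(\delta u) = 0$, so $\bar\phi(u)$ is a cycle; and if $u = \delta v$ with $v \in C^{p-1}(K-\overset\circ N(L))$ then $\bar\phi(u) = (-1)^{p}\,\overline\partial\,\bar\phi(v)$ is a boundary. Hence $\bar\phi$ descends to a well-defined homomorphism $H^p(K-\overset\circ N(L)) \to H_{n-p}(K^*,L^*)$. Applying the same reasoning to $\bar\phi^{-1}$, which obeys the corresponding sign-commutation relation, shows that $\bar\phi^{-1}$ sends cycles to cocycles and boundaries to coboundaries, so the induced map is a two-sided inverse and the homology map is an isomorphism.

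To formalize this without tracking signs by hand, I would instead absorb the sign: set $\psi_p = \eta_p\,\bar\phi$, where the units $\eta_p \in \{\pm 1\}$ are chosen recursively so that $\eta_{p+1} = (-1)^{p+1}\eta_p$ (one can take $\eta_p = (-1)^{p(p+1)/2}$). Then $\psi$ is a genuine chain isomorphism, $\psi_{p+1}\circ\delta = \overline\partial\circ\psi_p$, and so induces an isomorphism on homology. Since each $\eta_p$ is a unit, the map induced by $\bar\phi$ differs from that induced by $\psi$ only by a sign in each degree and is therefore an isomorphism as well.

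Because $(K,L)$ is a finite triangulation of the compact pair $(M,N)$, all the chain and cochain groups are finitely generated and the complexes are bounded, so no questions of well-definedness or convergence arise, and the whole content of the corollary is the transfer of the degreewise isomorphism through the homology functor. I expect the only real obstacle to be the bookkeeping of the sign $(-1)^{p+1}$ — concretely, checking that the recursion $\eta_{p+1} = (-1)^{p+1}\eta_p$ admits a consistent global solution — which is routine.
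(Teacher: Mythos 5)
Your proposal is correct and follows essentially the same route as the paper: the paper offers no separate proof of the corollary, treating it as an immediate consequence of the preceding theorem (``Thus we have proved the following version of the Alexander duality''), which is exactly the standard homological-algebra transfer you carry out. Your explicit verification that the sign $(-1)^{p+1}$ is harmless (cocycles go to cycles, coboundaries to boundaries, and likewise for $\bar\phi^{-1}$) merely spells out what the paper leaves implicit.
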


\phantom{.}

The intersection product at the relative chain-cochain level is the pairing

\begin{eqnarray*}
C_p(K,L) \otimes C_q(K^*,L^*) \overset \bullet \to C_{p+q-n}(K',L')
\end{eqnarray*} 
(with $p+q \ge n$) given by the composition 

\begin{eqnarray*}
C_p(K,L) \otimes C_q(K^*,L^*) \overset {1\otimes \phi^{-1}} \longrightarrow C_p(K,L) \otimes C^{n-q}(K - \overset \circ N(L)) 
\overset {sd_\# \otimes 1} \longrightarrow C_p(K',L') \otimes C^{n-q}(K - \overset \circ N(L))\\ 
\overset {1\otimes \theta^\#} \longrightarrow C_p(K',L') \otimes C^{n-q}((K - \overset \circ N(L))') 
\overset {1\otimes i} \longrightarrow C_p(K',L') \otimes C^{n-q}(K ',L') \overset \cap \longrightarrow  C_{p+q-n}(K ',L'),
\end{eqnarray*}
where $i : C^{n-q}((K - \overset \circ N(L))')  \longrightarrow C^{n-q}(K ',L')$ is the naturally induced mapping.

\phantom{.}

\emph{Remark}:   Similar to the absolute case, for simplexes $\sigma$ and $\tau$ of $K$, the underlying set $|\overline \sigma \cdot \overline {D(\tau)}|$ is the intersection of $|sd_\#(\sigma)|= |\sigma|$ and $|D(\tau)|$ mod $|L'|= |L|$, and in general, for any $\bar a \phantom{.}\epsilon \phantom{.} C_p(K,L)$ and any $\bar b \phantom{.}\epsilon \phantom{.} C_q(K^*,L^*)$, $|\bar a \cdot \bar b|$ is the intersection of $|sd_\#(a)|$ and $|D(b)|$  mod $|L'|$.  Furthermore, $|\theta_\#(\overline \sigma \cdot \theta_\#{D(\tau)})|$ is the intersection of $ |\sigma|$ and $|\theta_\#(D(\tau))|$ mod $|L|$.

\phantom{.}

Correspondingly there is  an induced intersection product on relative homology classes :  For  $[a] \phantom{.} \epsilon \phantom{.} H_p(K,L)$ and $[b] \phantom{.} \epsilon \phantom{.} H_q(K^*,L^*)$, $[a] \cdot [b] = [ a \cdot b ] \phantom{.} \epsilon \phantom{.} H_{p+q-n}(K',L')$.

\phantom{.}
 
Recall that for a topological space $X$, the epimorphism $\partial_\# : C_0(X) \to Z$ defined by $\partial_\#( \sum m_i p_i) = \sum m_i$, where $p_i$ are points in $X$, satisfies the property $\partial_\# (c \cap \eta) = [c,\eta] \phantom{.} (=\eta(c))$ for any $c \phantom{.} \epsilon \phantom{.} C_p(X)$ and $\eta \phantom{.} \epsilon \phantom{.} C^p(X)$.  If $X$ is path connected, then $\partial_\# (B_0(X)) =0$.  One can similarly define 
$\partial_\# : C_0(X,A) \to Z$, which  satisfies $\partial_\# (\bar c \cap \bar \eta) = [\bar c,\bar \eta] $ for any $\bar c \phantom{.} \epsilon \phantom{.} C_p(X,A)$ and $\bar \eta \phantom{.} \epsilon \phantom{.} C^p(X,A)$.

\phantom{.}

It follows from the definition that for any $p$-simplex $\sigma$ of $K$ and $(n-q)$-simplex $\tau$ of $K - \overset \circ  N(L), \phantom{..}  \overline \sigma  \phantom{.}   \cdot   \phantom{.}   \overline {D(\tau)} = sd_\#(\sigma) \cap \theta^\#(u_\tau)$.  In general, for any $ \overline a \phantom{.} \epsilon \phantom{.} C_p(K,L)$ and $v \phantom{.} \epsilon \phantom{.} C^{n-q}(K - \overset \circ N(L))$, $ \overline a \cdot \overline \phi (v) = sd_\#(a) \cap \theta^\#(v)$.  In particular, if $p+q=n$, then $\partial_\#( \overline a \cdot \overline \phi(v) )= [sd_\#(a) , \theta^\#(v)]$, here $\theta^\#(v) \phantom{.} \epsilon 
\phantom{.} C^{p}((K - \overset \circ N(L))')$ is identified with $i(\theta^\#(v))$ in $C^{p}(K ',L')$, where $i : C^p((K - \overset \circ N(L))')   \to C^p(K',L')$ is the naturally induced map. 

\phantom{.}

\begin{proposition} \label{GBF2}
For $\overline a \phantom{.} \epsilon \phantom{.} C_{n}(K,L)$ and $\beta \phantom{.}\epsilon \phantom{.} C^p(K)$, $\overline a \cap \beta =  \theta_\# ( sd_\# (\overline a) \cap \theta^\#(\beta))$.
\end{proposition}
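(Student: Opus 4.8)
The plan is to derive the identity first at the absolute chain level, as an immediate consequence of the functional property of the cap product recorded in the Lemma, and then to check that the resulting equation descends to the relative groups. The only ingredients I expect to need are that Lemma applied to the simplicial map $\theta$, together with the identity $\theta_\# \circ sd_\# = \mathrm{id}$ noted in Section 2.

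First I would choose a representative $a \in C_n(K)$ of the relative chain $\overline{a} \in C_n(K,L)$, and regard $\theta$ as the absolute simplicial map $\theta : K' \to K$, so that $\theta^\#(\beta) \in C^p(K')$ for $\beta \in C^p(K)$. Applying the Lemma with $f = \theta$, $X = K'$, $Y = K$, taking the chain $sd_\#(a) \in C_n(K')$ in place of $a$ and the cochain $\beta$ in place of $b$, gives
\begin{eqnarray*}
\theta_\#\big(sd_\#(a) \cap \theta^\#(\beta)\big) = \theta_\#\big(sd_\#(a)\big) \cap \beta = (\theta_\# \circ sd_\#)(a) \cap \beta = a \cap \beta,
\end{eqnarray*}
where the last equality uses $\theta_\# \circ sd_\# = \mathrm{id}$ on $C_n(K)$. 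This already establishes the asserted formula at the absolute level.

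It remains to pass to the quotient by $L$. The relative cap product $C_n(K,L) \otimes C^p(K) \to C_{n-p}(K,L)$ is induced from the absolute one because capping a chain supported in a subcomplex against any cochain produces a chain supported in the same subcomplex (the result is a face of the original simplex); in particular $a \cap \beta$ depends on $a$ only modulo $C_n(L)$. For the right-hand side I would verify that the composite $c \mapsto \theta_\#(sd_\#(c) \cap \theta^\#(\beta))$ carries $C_n(L)$ into $C_{n-p}(L)$: the subdivision map sends $C_n(L)$ into $C_n(L')$, capping against $\theta^\#(\beta)$ keeps the result in $C_{n-p}(L')$ by the support property just noted, and $\theta$ is a map of pairs $(K',L') \to (K,L)$, so $\theta_\#$ sends $C_{n-p}(L')$ into $C_{n-p}(L)$. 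Hence both sides of the displayed equation are independent of the representative $a$ modulo $L$, and the equation descends to the identity $\overline{a} \cap \beta = \theta_\#(sd_\#(\overline{a}) \cap \theta^\#(\beta))$ in $C_{n-p}(K,L)$.

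The computational core is thus a one-line application of the Lemma; the step requiring the most care is the last one, namely confirming that every map in the composition respects the subcomplex $L$, so that the absolute identity is legitimately inherited by the relative chain groups.
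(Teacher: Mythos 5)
Your proof is correct and takes essentially the same route as the paper: the computational core is the identical application of Lemma 2.0.1 with $f=\theta$ followed by $\theta_\# \circ sd_\# = \mathrm{id}$. The only difference is that the paper invokes the relative form of the lemma (stated there for maps of pairs) and so omits the descent-to-relative-chains verification you carry out explicitly; that extra check is harmless and, if anything, handles more carefully the point that $\beta$ is an absolute cochain on $K$ rather than a relative one.
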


\begin{proof}  By Lemma 2.0.1,  we have

\begin{eqnarray*}
\theta_\#(sd_\#(\overline a) \cap \theta^\#(\beta)) &= &  \theta_\#({sd_\#(\overline a))} \cap \beta \\
&=&  \overline a \cap \beta,
\end{eqnarray*} 
since $\theta_\# \circ sd_\# = id$.
\end{proof}

\begin{proposition} \label{GBF2}
Let $\alpha \phantom{.} \epsilon \phantom{.} C^p(K)$, $\beta \phantom{.}\epsilon \phantom{.} C^q(K)$, and $T \phantom{.} \epsilon \phantom{.} C_{p+q}(K)$.  Then $(\alpha \cup \beta )(T) = \partial_\#(\theta_\#(\theta_\#(T \cdot \phi(\alpha)) \cdot \phi(\beta)))$.
\end{proposition}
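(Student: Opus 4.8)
The plan is to evaluate the right-hand side by peeling off the two nested intersection products from the inside out, turning each into a cap product and then absorbing the surrounding $\theta_\#$. First I would fix the bookkeeping of degrees so that the already-established identities apply at each stage: since $\alpha \in C^p(K)$ we have $\phi(\alpha) \in C_{n-p}(K^*)$, so $T \cdot \phi(\alpha) \in C_{(p+q)+(n-p)-n}(K') = C_q(K')$; applying $\theta_\#$ places this in $C_q(K)$; then $\phi(\beta) \in C_{n-q}(K^*)$ forces the outer product into $C_0(K')$, and a final $\theta_\#$ followed by $\partial_\#$ returns an integer, matching the degree of $(\alpha \cup \beta)(T)$.

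For the innermost product I would invoke the identity $a \cdot \overline\phi(v) = sd_\#(a) \cap \theta^\#(v)$ recorded just before the statement, applied with $a = T$ and $v = \alpha$, to get $T \cdot \phi(\alpha) = sd_\#(T) \cap \theta^\#(\alpha)$. Applying $\theta_\#$ and using Lemma 2.0.1 together with $\theta_\# \circ sd_\# = id$ (exactly the computation of the preceding proposition) collapses this to $\theta_\#(T \cdot \phi(\alpha)) = \theta_\#\bigl(sd_\#(T) \cap \theta^\#(\alpha)\bigr) = \theta_\#(sd_\#(T)) \cap \alpha = T \cap \alpha \in C_q(K)$. I would then repeat the same two moves one level out: since $T \cap \alpha \in C_q(K)$ and $v = \beta$, the identity gives $(T \cap \alpha) \cdot \phi(\beta) = sd_\#(T \cap \alpha) \cap \theta^\#(\beta)$, and a second application of Lemma 2.0.1 with $\theta_\# \circ sd_\# = id$ yields $\theta_\#\bigl((T \cap \alpha) \cdot \phi(\beta)\bigr) = (T \cap \alpha) \cap \beta \in C_0(K)$.

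At this point the entire nested expression has been reduced to $(T \cap \alpha) \cap \beta$, and it remains to invoke the chain-level associativity of cap and cup, $(T \cap \alpha) \cap \beta = T \cap (\alpha \cup \beta)$, and then the augmentation property $\partial_\#(c \cap \eta) = [c,\eta] = \eta(c)$ recalled above, to obtain $\partial_\#\bigl(T \cap (\alpha \cup \beta)\bigr) = (\alpha \cup \beta)(T)$, which is the desired left-hand side.

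The step I expect to require the most care, rather than a routine citation, is twofold. First, the relation $(c \cap \alpha) \cap \beta = c \cap (\alpha \cup \beta)$ must be used at the chain-cochain level, not merely on homology, so I would cite it in precisely that form and verify that the sign convention in the source (e.g. \cite{greenberg}) agrees with the conventions for $\cap$ and $\cup$ in force here, since an off-by-sign discrepancy would corrupt the final evaluation. Second, both the identity $a \cdot \overline\phi(v) = sd_\#(a) \cap \theta^\#(v)$ and the $\theta_\#$-collapse are stated for simplices or for distinguished chains, so I must confirm they extend by linearity to the intermediate chain $T \cap \alpha$, which is a general element of $C_q(K)$ rather than a single simplex; once the linearity of $sd_\#$, $\theta_\#$, $\theta^\#$, and of the products in each slot is noted this is immediate, but it is the point at which the argument could silently fail.
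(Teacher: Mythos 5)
Your proof is correct and follows essentially the same route as the paper's: both rest on the definitional identity $a \cdot \phi(v) = sd_\#(a) \cap \theta^\#(v)$, the collapse $\theta_\#(sd_\#(a) \cap \theta^\#(\gamma)) = a \cap \gamma$ (Lemma 2.0.1 plus $\theta_\# \circ sd_\# = id$, i.e.\ the preceding proposition), cap/cup associativity, and the augmentation property. The only difference is direction — you simplify the right-hand side inward-out while the paper expands the left-hand side — which is the same chain of equalities read backwards.
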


\begin{proof}  The result is obtained from the following direct computation using Proposition 3.0.4 and the definition of intersection product.

\begin{eqnarray*}
(\alpha \cup \beta)(T) & = & \partial_\#(T \cap (\alpha \cup \beta)) = \partial_\#((T\cap \alpha) \cap \beta) \\
&= &  \partial_\# (\theta_\#(sd_\#(T \cap \alpha) \cap \theta^\#(\beta))\\
&=&  \partial_\#(\theta_\#((T \cap \alpha) \cdot (\phi(\beta))) \\
&=&  \partial_\#(\theta_\#(\theta_\#( sd_\#(T) \cap \theta^\#(\alpha)) \cdot \phi(\beta))) \\
&=& \partial_\#(\theta_\#(\theta_\#(T \cdot \phi(\alpha) ) \cdot \phi(\beta))).
\end{eqnarray*} 

\end{proof}

Note that the above result extends to the case where $\alpha \phantom{.} \epsilon \phantom{.} C^p(K - \overset \circ  N(L))$, $\beta \phantom{.}\epsilon \phantom{.} C^q(K - \overset \circ  N(L))$, and $T \phantom{.} \epsilon \phantom{.} C_{p+q}(K - \overset \circ  N(L))$, which is the case we will consider later.  In this case, for the right hand side $T$ may be considered as in $C_{p+q}(K,L)$, and $\alpha$ as in $C^p(K)$, so $T \cap \alpha \phantom{.} \epsilon \phantom{.} C_q(K,L)$ and the intersection product $(T \cap \alpha) \cdot \overline \phi(\beta)$ is defined.

\phantom{.}

The deformation retract from $(|K'|,|N(L')|)$ to $(|K'|,|L'|)$ of the underlying spaces is homotopically equivalent to a simplicial map $r : (|K'|,|N(L')|)\to(|K'|,|L'|)$ via simplicial approximation of the deformation retract, and it induces an isomorphism $r_* : H_q(K',N(L')) \to H_q(K',L')$ as simplicial homology groups ( \cite[Theorem 3.6.6, p.120]{hilton}), which is the inverse of the inclusion-induced isomorphism $j_* : H_q(K',L') \to H_q(K',N(L'))$.

\begin{lemma} \label{GBF2}
The inclusion map $i : (K^*,L^*) \to (K',N(L'))$ induces an isomorphism $i_* : H_q(K^*,L^*) \to H_q(K',N(L'))$.
\end{lemma}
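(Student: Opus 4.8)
The plan is to exploit the fact that $i$ is, at the level of underlying polyhedra, the identity map of the pair: since $|K^*|=|K'|$ and $|L^*|=|N(L')|$, the only difference between the two sides is that the left-hand group is computed from the block (dual-cell) chain complex $C_*(K^*)$ while the right-hand group is computed from the simplicial chain complex $C_*(K')$. Thus the lemma amounts to saying that the comparison map from block homology to simplicial homology is an isomorphism. Concretely, $i_\#$ sends an oriented block $D(\sigma)$ to the simplicial chain in $C_{n-p}(K')$ given by the signed sum of the $(n-p)$-simplices of $K'$ making up $D(\sigma)$. The block boundary operator defined above (the subcomplex of $K'$ spanned by barycenters of simplices having $\sigma$ as a \emph{proper} face) is exactly what is needed for $i_\#$ to be a chain map: under the simplicial boundary, the interior $(q-1)$-faces of a block cancel in pairs, leaving precisely the chain filling the boundary blocks. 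Since $i(L^*)\subseteq N(L')$, the map $i_\#$ is a morphism of the short exact sequences $0\to C_*(L^*)\to C_*(K^*)\to C_*(K^*,L^*)\to 0$ and $0\to C_*(N(L'))\to C_*(K')\to C_*(K',N(L'))\to 0$.

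First I would apply the five lemma to the induced morphism of the long exact homology sequences of these two pairs. This reduces the relative statement to the two absolute statements, that $i_*:H_q(K^*)\to H_q(K')$ and $i_*:H_q(L^*)\to H_q(N(L'))$ are isomorphisms. Both are instances of a single assertion: for a regular block dissection of a polyhedron (here $|K'|$, respectively $|N(L')|$), with each block a PL ball and with the geometric boundary operator above, the inclusion of the block chain complex into the simplicial chain complex of the subdividing triangulation induces a homology isomorphism. That the dual cells $D(\sigma)$ form such a dissection of $|K'|$ is Whitehead's theorem \cite{whitehead}, and $L^*=N(L')$ is the corresponding dissection of the regular neighborhood; here the hypothesis that $M$ is a closed manifold is used, so that each $D(\sigma)$ is genuinely a PL ball. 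To prove the absolute statement I would filter both sides by block-skeleta: let $K^{*(q)}$ be the union of the blocks of dimension $\le q$ and $K'_q\subseteq K'$ the subcomplex triangulating $|K^{*(q)}|$, so that $i$ is filtration preserving. Induction on $q$, via the five lemma on the pairs $(K^{*(q)},K^{*(q-1)})$ and $(K'_q,K'_{q-1})$, reduces matters to checking that $i_\#$ induces an isomorphism $H_*(K^{*(q)},K^{*(q-1)})\to H_*(|K^{*(q)}|,|K^{*(q-1)}|)$. The left group is, by definition of the block complex, free abelian on the $q$-blocks concentrated in degree $q$; the right group, by excision together with the fact that each $D(\sigma)$ is a $q$-ball with boundary sphere triangulated by $\partial D(\sigma)$, is also free on the same $q$-blocks in degree $q$, and $i_\#$ carries each block generator to the corresponding fundamental class. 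The same filtration can be run relatively, starting from $L^*$, to handle the pair directly.

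The main obstacle is precisely this last identification: verifying that the geometric block boundary operator of $C_*(K^*)$ agrees, under $i_\#$, with the cellular (degree) boundary arising from the pairs $(K^{*(q)},K^{*(q-1)})$, i.e. that each dual cell is a regularly embedded PL ball whose boundary sphere is exactly triangulated by $\partial D(\sigma)$, so that the relative computation yields one free generator per $q$-block with the correct incidence numbers. Once the dual-cell decomposition is recognized as a regular CW structure refining $K'$ in the sense of \cite{whitehead}, this becomes the standard comparison of cellular and simplicial homology, and the five-lemma reductions above then close the argument.
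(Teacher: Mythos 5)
Your proposal is correct and follows essentially the same route as the paper: both reduce the relative statement, via the five lemma applied to the morphism of long exact homology sequences induced by $i$, to the two absolute isomorphisms $H_q(K^*)\to H_q(K')$ and $H_q(L^*)\to H_q(N(L'))$. The only difference is that the paper disposes of these absolute cases by citation (Hilton--Wylie, Theorem 3.8.8, that a block dissection computes simplicial homology, applied to $K'$ and to $N(L')$ with its block dissection $L^*$), whereas you re-prove that theorem from scratch by the standard skeletal-filtration argument.
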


\begin{proof}  By \cite[Theorem 3.8.8, p. 131]{hilton}, the inclusion map $i : K ^* \to K'$ induces an isomorphism $i_* : H_q(K^*) \to H_q(K')$.  Now since $L^*$ is a block dissection of $N(L')$, the inclusion map $i : L^* \to N(L')$ induces an isomorphism $i_* : H_q(L^*) \to H_q(N(L'))$.  The result follows from the long exact sequences of $H_*(K^*,L^*)$ and  $H_*(K',N(L'))$, and by applying the Five Lemma:

$$
\begin{array}{ccccc}
H_q(L^*) & \longrightarrow  H_q(K ^*)  & \longrightarrow H_q(K ^*,L^*)  & \longrightarrow H_{q-1}(L^*)  & \longrightarrow  H_{q-1}(K ^*) \\
\phantom{..}\Bigg \downarrow { i_*} & \phantom{.....}\Bigg \downarrow { i_*} & \phantom{.....}\Bigg \downarrow { i_*}  & \phantom{.....}\Bigg \downarrow { i_*} & \phantom{.....}\Bigg \downarrow { i_*}   \\
\ H_q(N(L'))  & \longrightarrow  H_q(K ') & \longrightarrow H_q(K',N(L')) & \longrightarrow H_{q-1}(N(L')) &  \longrightarrow  H_{q-1}(K') \\ 
\end{array}
$$

\end{proof}  

We now restrict to the case where $p=1$, $q=2$, and $n=3$, and consider the following diagram

 $$
 \begin{array}{ccccc}
 H^1(K - \overset \circ N(L) )  \otimes  H^1(K - \overset \circ N(L) ) && \buildrel \cup \over \longrightarrow && H^2(K - \overset \circ N(L) )  \\
\phantom{.........}\Bigg \downarrow {\overline \phi \otimes \overline \phi}   &&&& \Bigg \downarrow {\overline \phi}\\
 H_2(K^*,L^*) \otimes  H_2(K^*,L^*) &&   && H_1(K^*,L^*) \\
 \phantom{...................}\Bigg \downarrow {(\theta_* \circ r_* \circ {i}_*) \otimes id } &&&& \phantom{.......}\Bigg \downarrow \ { {r_* \circ i_*}}  \\
 H_2(K,L) \otimes H_2(K^*,L^*) && \buildrel \bullet \over\longrightarrow &&    H_1(K',L')\\
 \end{array}
 $$
 
 \phantom{.}
 
 \begin{proposition} \label{GBF2}
The above diagram commutes. i.e. for any $[\alpha]$, $[\beta] \phantom{.} \epsilon \phantom{.} H^1(K - \overset \circ N(L))$, $( r_* \circ i_*)(\big [ \phantom{.}{\overline \phi(\alpha \cup \beta)}\phantom{.} \big ] ) = ((\theta_*  \circ r_*  \circ i_*) \big ([ \phantom{.}\overline \phi(\alpha)\phantom{.} \big ] )) \cdot \big[\overline \phi(\beta) \big ] $
\end{proposition}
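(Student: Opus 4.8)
The plan is to verify the identity at the level of chains for cocycle representatives and then push it through the duality isomorphisms into $H_1(K',L')$. So I would first fix cocycles $\alpha,\beta\in Z^1(K-\overset{\circ}{N}(L))$ representing the two classes. By Theorem 3.0.1 the relative chains $\overline\phi(\alpha)=\overline{D(\alpha)}$ and $\overline\phi(\beta)=\overline{D(\beta)}$ are then relative cycles of $(K^*,L^*)$; equivalently, $\partial D(\alpha)$ and $\partial D(\beta)$ are carried by $L^*=N(L')$. This last fact is what will let me discard the error terms at the end.

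The left-hand side I would rewrite purely in terms of a cap product. Since $\phi(\gamma)=sd_\#(\xi)\cap\theta^\#(\gamma)$ for every cochain $\gamma$, since $\theta^\#$ is multiplicative for the simplicial cup product (using the order-preserving convention on $\theta$ from Section 2), and since $c\cap(u\cup v)=(c\cap u)\cap v$, I get
\[
\phi(\alpha\cup\beta)=sd_\#(\xi)\cap\bigl(\theta^\#\alpha\cup\theta^\#\beta\bigr)=\bigl(sd_\#(\xi)\cap\theta^\#\alpha\bigr)\cap\theta^\#\beta=D(\alpha)\cap\theta^\#\beta .
\]
Hence $(r_*\circ i_*)\bigl[\overline\phi(\alpha\cup\beta)\bigr]$ is represented in $C_1(K',L')$ by $r_\#\bigl(D(\alpha)\cap\theta^\#\beta\bigr)$.

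For the right-hand side I would use the definition of the relative intersection product directly. Taking $A=\theta_\# r_\# D(\alpha)\in C_2(K,L)$ as a representative of $(\theta_*\circ r_*\circ i_*)[\overline\phi(\alpha)]$, the formula $\overline a\cdot\overline\phi(v)=sd_\#(a)\cap\theta^\#(v)$ recorded just before Proposition 3.0.4, applied with $v=\beta$, gives
\[
A\cdot\overline\phi(\beta)=sd_\#\bigl(\theta_\# r_\# D(\alpha)\bigr)\cap\theta^\#\beta .
\]
Thus the whole proposition reduces to the single claim that $sd_\#(\theta_\# r_\# D(\alpha))\cap\theta^\#\beta$ and $D(\alpha)\cap\theta^\#\beta$ determine the same class once reduced modulo $N(L')$.

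The heart of the argument is this comparison, and it runs on two chain homotopies. Because $\theta_\#$ is a chain-homotopy inverse of $sd_\#$ one has $sd_\#\theta_\#\simeq\mathrm{id}$ on $C_*(K')$, and because $r$ is a simplicial approximation of a deformation retract one has $r_\#\simeq\mathrm{id}$; composing them produces a chain homotopy $Q$ with $sd_\#\theta_\# r_\# D(\alpha)-D(\alpha)=\partial Q\,D(\alpha)+Q\,\partial D(\alpha)$. I would then cap this with the cocycle $\theta^\#\beta$, noting $\delta(\theta^\#\beta)=\theta^\#(\delta\beta)=0$: by the Leibniz rule the term $\partial Q\,D(\alpha)$ caps to a boundary, while $Q\,\partial D(\alpha)$ is carried by $N(L')$ since $\partial D(\alpha)$ is carried by $L^*=N(L')$ and both $Q$ and $-\cap\theta^\#\beta$ preserve supports. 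Hence the two chains agree in $H_1(K',N(L'))$, and transporting along the inclusion isomorphism $j_*\colon H_1(K',L')\to H_1(K',N(L'))$, whose inverse is $r_*$, and matching the left-hand representative through $j_\# r_\#\simeq\mathrm{id}$, yields the asserted equality in $H_1(K',L')$. The main obstacle I anticipate is precisely the bookkeeping in this paragraph: one must invoke the locality (naturality) of $Q$ to keep $Q\,\partial D(\alpha)$ inside $N(L')$, and one must shepherd each chain through the chain of groups $H_1(K^*,L^*)$, $H_1(K',N(L'))$, $H_1(K',L')$, using Lemma 3.0.6 and the retraction isomorphism to identify them.
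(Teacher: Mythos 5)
Your reduction of both sides is correct and parallels the paper's opening moves: the left side is represented by the chain $D(\alpha)\cap\theta^\#\beta$ and the right side by $sd_\#(\theta_\#r_\#D(\alpha))\cap\theta^\#\beta$, so everything does rest on comparing these two chains modulo the neighborhood of $L$. The genuine gap is the step you yourself flag as the main obstacle: the assertion that the chain homotopy $Q$ between $sd_\#\theta_\#r_\#$ and the identity ``preserves supports,'' so that $Q\,\partial D(\alpha)$ lies in $C_*(N(L'))$. This cannot be obtained by appealing to ``locality (naturality)'' of $Q$: the subcomplex $N(L')=L^*$ of $K'$ is not the barycentric subdivision of any subcomplex of $K$, and the naturality of the standard subdivision homotopy $D$ (the one with $\partial D+D\partial=sd_\#\theta_\#-\mathrm{id}$) is precisely naturality with respect to subdivided subcomplexes: $D$ is carried by the rule sending a simplex of $K'$ to the subdivision of the smallest simplex of $K$ containing it, and for a cell of $L^*$ that smallest simplex merely meets $L$, so its subdivision protrudes far outside $N(L')$. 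In fact $sd_\#\theta_\#$ itself fails to map $C_*(N(L'))$ into $C_*(N(L'))$: if $\tau$ is an edge of $K$ with one vertex $v$ in $L$ and last vertex $w$ not in $L$, the half-edge $\langle v,\hat\tau\rangle$ ($\hat\tau$ the barycenter) lies in $L^*$, yet $sd_\#\theta_\#\langle v,\hat\tau\rangle=\pm\, sd_\#(\tau)$ contains the other half-edge $\langle \hat\tau,w\rangle\notin L^*$. Consequently, for the obvious composite $Q=sd_\#\theta_\#P+D$ (with $P$ the prism homotopy for $r\simeq\mathrm{id}$) your support claim is false, and $Q\,\partial D(\alpha)$ genuinely escapes $N(L')$. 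A support-preserving homotopy does exist, but only through a specific ordered construction that your proof would have to supply and justify: take $Q=D\circ r_\#+P$, which works because $r_\#$ first maps $N(L')$ into $L'=sd(L)$, the homotopy $D$ does restrict to subdivided subcomplexes such as $sd(L)$, and $P$ preserves $N(L')$ because the retraction homotopy does. There is also an inconsistency of conventions worth noting: your Leibniz step needs $\delta(\theta^\#\beta)=0$, i.e.\ that the zero-extension of $\beta$ to $C^1(K)$ is a cocycle; but in exactly that convention $\partial D(\alpha)=\pm\, sd_\#(\xi)\cap\theta^\#(\delta\tilde\alpha)=0$ as well (the roles of $\alpha$ and $\beta$ being symmetric), so the error term you labor to control is identically zero, while in the weaker convention where $\partial D(\alpha)$ is nonzero and only carried by $L^*$, the hypothesis $\delta(\theta^\#\beta)=0$ fails for the same reason.

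It is instructive to see how the paper avoids all of this: its proof introduces no chain homotopy whatsoever. By Lemma 2.0.1 and $sd^\#\circ\theta^\#=\mathrm{id}$ one has the exact chain identity $sd_\#(\xi)\cap\theta^\#(\alpha)=sd_\#(\xi\cap\alpha)$, so $D(\alpha)$ lies in the image of $sd_\#$; combined with $\theta_\#\circ sd_\#=\mathrm{id}$ (Proposition 3.0.4) this gives $\xi\cap\alpha=\theta_\#(D(\alpha))$ exactly, whence $D(\alpha)\cap\theta^\#\beta=sd_\#(\xi\cap\alpha)\cap\theta^\#\beta=\overline{(\xi\cap\alpha)}\cdot\overline\phi(\beta)=\overline{\theta_\#(D(\alpha))}\cdot\overline\phi(\beta)$ on the nose, with no correction terms; the retraction $r$ never acts at the chain level and enters only through the identification $r_*=j_*^{-1}$ of homology classes. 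If you wish to keep your homotopy-theoretic framing, the honest repair is either to import this exactness observation (so that only the homotopy $P$, which does respect $N(L')$, survives), or to replace the appeal to locality by the explicit construction $Q=D\circ r_\#+P$ together with a proof of its carrier property.
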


\begin{proof} 

For any $[\alpha]$, $[\beta]$ in $H^1(K - \overset \circ N(L))$, 

\begin{eqnarray*}
(( r_* \phantom{.}\circ \phantom{.}  i_*) \circ \overline {\phi}) ([\alpha] \cup [\beta] ) &=& ( r_* \circ i_*)(\overline {\phi} ( [\alpha \cup \beta]))\\
 & = & ( r_*\circ i_*)(\big [ \overline {sd_\#(\xi) \cap \theta^\#(\alpha \cup \beta)}\big ] ) \\
 &=& \big [ \overline {sd_\#(\xi) \cap \theta^\#(\alpha \cup \beta)}\big ],
 \end{eqnarray*} 
Considering $\alpha$ and $\beta$ as in $Z^1(K,L)$, then $\theta^\#(\alpha \cup \beta)\in Z^1(K',L')$, and we may consider $\big [ \overline {sd_\#(\xi) \cap \theta^\#(\alpha \cup \beta)}\big ]$ as in $H_1(K',L')$.  Then $i_*(\big [ \overline {sd_\#(\xi) \cap \theta^\#(\alpha \cup \beta)}\big ] )= j_*(\big [ \overline {sd_\#(\xi) \cap \theta^\#(\alpha \cup \beta)}\big ] )$, or equivalently $( r_*\circ i_*)(\big [ \overline {sd_\#(\xi) \cap \theta^\#(\alpha \cup \beta)}\big ] ) = \big [ \overline {sd_\#(\xi) \cap \theta^\#(\alpha \cup \beta)}\big ]$, since $j_* = r_*^{-1}$.

\phantom{.}

Now  as an element in $H_1(K',L')$,\phantom{..}
$\big [ \overline {sd_\#(\xi) \cap \theta^\#(\alpha \cup \beta)}\big ] = \big [ \overline {sd_\#(\xi) \cap (\theta^\#(\alpha) \cup \theta^\#(\beta))} \big ]  = \big[ \overline {(sd_\#(\xi) \cap \theta^\#(\alpha)) \cap \theta_\#(\beta)} \big]$.

\phantom{.}
 
Using the property $sd^\# \circ \theta^\# = id$ and by lemma 2.0.1 we obtain 

\begin{eqnarray*} 
sd_\#(\xi )\cap \theta^\#(\alpha) =  sd_\#(\xi \cap (sd^\# \circ \theta^\#)(\alpha)) = sd_\#(\xi \cap \alpha), 
\end{eqnarray*} 

so we have
 
\begin{eqnarray*} 
\big[ \overline {(sd_\#(\xi) \cap \theta^\#(\alpha)) \cap \theta_\#(\beta)} \big] = \big[ \overline {sd_\#(\xi \cap \alpha) \cap \theta^\#(\beta)} \big] \newline =  \big[ \overline {(\xi \cap \alpha) \cdot  (sd_\#(\xi )\cap \theta^\#(\beta))} \big] 
\end{eqnarray*}

\phantom{.} 
 
 It follows from Lemma 2.0.1 and the fact that $\theta_\# \circ sd_\# = id$ that 
 
\begin{eqnarray*} 
\big[ \overline {(\xi \cap \alpha) \cdot  (sd_\#(\xi )\cap \theta^\#(\beta))} \big]
&=& \big [\overline {(\theta_\# (sd_\#(\xi) \cap \theta^\#(\alpha) ) \cdot (sd_\#(\xi) \cap \theta^\#(\beta)}) \big] \\
&=& \theta_* (\big[ \overline {sd_\#(\xi) \cap \theta^\#(\alpha)} \big]) \cdot \big [  \overline {sd_\#(\xi) \cap \theta^\#(\beta) }\big] 
\end{eqnarray*} 
Note that here $\overline {sd_\#(\xi) \cap \theta^\#(\alpha)}$ is considered as in $C_2(K',L')$.  The last homology class is equal to

\begin{eqnarray*} 
((\theta_* \circ r_* \circ i_*)( \big [ \overline {sd_\#(\xi) \cap \theta^\#(\alpha)} \big]) )\cdot \big[ \overline {sd_\#(\xi) \cap \theta^\#(\beta)} \big] 
&=& ((\theta_* \circ r_* \circ i_*) \big[ \overline \phi (\alpha)]) \cdot \big[ \overline \phi(\beta) \big]\\
&=& (\theta_* \circ r_* \circ i_* \otimes id) ([\bar \phi(\alpha)], [\bar \phi(\beta)]).
\end{eqnarray*} 

\phantom{.}

\end{proof}

\vspace{0.2in}

\section{Linkings in $S^3$}

Let $K = S^3$ with the usual orientation and a fixed triangulation, $L =  \coprod K_i$ an oriented link in $S^3$, $T_i = \partial N(K_i)$, and $\mu_i \phantom{.}\epsilon \phantom{.}  Z_1(S^3- \overset \circ N(L))$ be the 1-cycle representing the meridian of $K_i$ (with $lk(K_i,\mu_i)=1$).  For each $i$, let $[\mu_i^*] \phantom{.}\epsilon \phantom{.}  H^1(S^3 - \overset \circ N(L)) = Hom( H_1(S^3 - \overset \circ N(L),Z)$ be the dual of $[\mu_i] \phantom{.}\epsilon \phantom{.}  H_1(S^3 - \overset \circ N(L))$  and let $F_i$ be a spanning surface of $K_i$ in $S^3$.  It is well known that the isomorphism 

\begin{eqnarray*}
H^1(S^3  - \overset \circ {N(L)}) \to H_2(S^3,L) 
 \end{eqnarray*} 
given by the cap product with the fundamental class $[S^3  - \overset \circ N(L)]$, takes $[\mu_i^*] $ to $[F_i]$, that is, $[S^3  - \overset \circ N(L)] \cap[\mu_i^*] = [F_i]$.   On the other hand, the Alexander duality in Corollary 3.0.3 

\begin{eqnarray*}
\overline \phi: H^1(S^3  - \overset \circ N(L)) \to H_2((S^3)^*,L^*),
\end{eqnarray*} 
gives 

\begin{eqnarray*}
\overline \phi ([\mu_i^*]) &=&[sd_\#(S^3  - \overset \circ N(L) ) \cap \theta^\#(\mu_i^*)] \\
&=& sd_*([S^3  - \overset \circ N(L)]) \cap \theta^*([\mu_i^*]) \\
&=& \theta_*^{-1} ([S^3  - \overset \circ N(L)])  \cap \theta^*([\mu_i^*]).
\end{eqnarray*} 
By Lemma 2.0.1 again, the last expression is equal to $\theta_*^{-1} ([S^3  - \overset \circ N(L)]  \cap {\theta^*}^{-1}(\theta^*([\mu_i^*]))$, which is equal to ${\theta_*}^{-1} ([S^3  - \overset \circ N(L)]  \cap [\mu_i^*] )= \theta_*^{-1}([F_i])$, and is equal to $i_*^{-1} \circ j_* \circ \theta _*^{-1} ([F_i])$ as an element in $H_2((S^3)^*,L^*)$.  This shows that $[F_i] = (\theta_* \circ r_* \circ i_*)(\overline \phi ([\mu_i^*]))$.  It follows that  $(\theta_\# \circ r_\# \circ i_\#)(\overline \phi ([\mu_i^*])) = F_i + b_i$ for some $b_i \in  B_2(S^3,L)$.  So the underlying set $|F_i| = |(\theta_\# \circ r_\# \circ i_\#)(\overline \phi ([\mu_i^*]))| \cup |b_i|$.

\phantom{.}

As an example we apply the intersection theory developed in the previous section to obtain a formula for the ordinary (second-order) linking number of a link with two components.

\phantom{.}

\emph{Example.} Let $L = K_1 \cup K_2$ be an oriented link of two components in the given specific order with meridians $\mu_1, \mu_2$ respectively.  Then $lk(K_1,K_2) = ([\mu_1^*] \cup [\mu_2^*]) ([T_1])(= ([\mu_2^*] \cup [\mu_1^*]) ([T_2])= -([\mu_2^*] \cup [\mu_1^*]) ([T_1]))$.  By Proposition 3.0.5, we have

\begin{eqnarray*}
(\mu_1^* \cup \mu_2^*)(T_1)= \partial_ \sharp (\theta_\sharp(\theta_\sharp(T_1 \cdot \overline \phi(\mu_1^*) ) \cdot \overline \phi(\mu_2^*) ).
\end{eqnarray*}
We first note that since $|T_1|\subset S^3  - \overset \circ N(L)$, $|\partial_ \sharp (\theta_\sharp(\theta_\sharp(T_1 \cdot \overline \phi(\mu_1^*) )))|= |T_1| \cap |(\theta_\# \circ r_\# \circ i_\#)(\overline \phi ([\mu_i^*]))| \subset |T_1|.$  It follows that

\begin{eqnarray*}
|\theta_\sharp(\theta_\sharp(T_1 \cdot \overline \phi(\mu_1^*) ) \cdot \overline \phi(\mu_2^*) )|
&=& | \theta_\sharp(T_1 \cdot \overline \phi(\mu_1^*) )| \cap |(\theta_\# \circ r_\# \circ \circ i_\#)(\overline \phi(\mu_2^*) )|\\
&=& |T_1| \cap | F_1 +b_1| \cap |F_2 + b_2| \\
&=& |T_1| \cap (| F_1| \cup |b_1|) \cap (|F_2| \cup |b_2|) \\
&=& (|T_1| \cap | F_1| \cap |F_2|) \cup (|T_1| \cap | F_1| \cap |b_2|) \\
&\phantom{....}& \phantom{...}\cup (|T_1| \cap |b_1| \cap |F_2|) \cup (|T_1| \cap | b_1| \cap |b_2|).\\
\end{eqnarray*}
Now $b_i=\partial \beta_i+\overset  \thicksim {b_i}$, $\beta_i \phantom{.}\epsilon \phantom{.} C_3(S^3)$ and $\overset  \thicksim {b_i} \phantom{.} \epsilon \phantom{.} C_2(L)$ for $i=1,2$, the second part in the above union can be written as
$|T_1| \cap | F_1| \cap |b_2| = (|T_1| \cap | F_1| \cap |\partial \beta_2|) \cup (|T_1| \cap | F_1| \cap |\overset  \thicksim {b_2}|$.

\phantom{.}

Since $|T_1| \cap | F_1|$ is a circle in $T_1$ parallel to $K_1$, and $|\partial \beta_2|$ is homeomorphic to a closed surface in $S^3$, the algebraic intersection number of $|T_1| \cap | F_1|$ with $|\partial \beta_2|$ is $0$.  Clearly $|T_1| \cap | F_1| \cap |\overset  \thicksim {b_2}| = \phi$(the empty set).  So the algebraic intersection number due to the second part in the union is $0$.  Similar arguments apply to show that the algebraic intersection number of the third and the fourth parts in the union are both $0$.
Thus, using $\#$ for the algebraic intersection number, we have 

\begin{eqnarray*}
(\mu_1^* \cup \mu_2^*)(T_1)
&= &\partial_ \sharp (\theta_\sharp(\theta_\sharp(T_1 \cdot \overline \phi(\mu_1^*) ) \cdot \overline \phi(\mu_2^*) )\\
&=& \# ( |T_1| \cap | F_1| \cap |F_2|)\\
&=& \# ( |K_1| \cap |F_2|)\\
&=& lk(K_1,K_2).
\end{eqnarray*}

\emph{Remark}:   One can also show that 
\begin{eqnarray*}
(\mu_2^* \cup \mu_1^*)(T_2)= \partial_ \sharp (\theta_\sharp(\theta_\sharp(T_2 \cdot F_2 ) \cdot F_1 )) = lk(K_1,K_2).
\end{eqnarray*}

Now we consider the third-order linking.  Let $L = K_1 \cup K_2 \cup K_3$ be an oriented link of three components in the given specific order with meridians $\mu_1, \mu_2,\mu_3$ respectively.  Then 

\begin{eqnarray*}
H^1(S^3 - \overset \circ N(L)) = < [\mu_1^*], [\mu_2^*], [\mu_3^*]> \cong Z^3.
\end{eqnarray*} 

Assume $lk(K_1,K_2) = lk(K_2,K_3)=lk(K_1,K_3) = 0$.  Now since
$H_2(S^3 - \overset \circ N(K_1 \cup K_2)) = <[T_1]> (=<[T_2]>) \cong Z$, and $\displaystyle ([\mu_1^*] \cup [\mu_2^*])([T_1]) = (\mu_1^* \cup\mu_2^*)(T_1) = lk(K_1,K_2) = 0 $, which implies that $[\mu_1^* \cup\mu_2^*] = 0$ in $H^2(S^3 - \overset \circ N(K_1 \cup K_2))$, which in turn implies that $[\mu_1^* \cup\mu_2^*] = 0$ in $H^2(S^3 - \overset \circ N(L))$.  Thus we have $\mu_1^* \cup\mu_2^* = \delta \overset  \thicksim  C_{12}$ for some  $\overset  \thicksim  C_{12} \phantom{.} \epsilon \phantom{.} C^1(S^3 - \overset \circ N(L))$.  Similarly $\mu_2^* \cup\mu_3^* = \delta \overset  \thicksim  C_{23}$ for some  $\overset  \thicksim  C_{23} \phantom{.} \epsilon \phantom{.} C^1(S^3 - \overset \circ N(L))$.  Then the Massey third-order product of $L$, in the given order of components $K_1$, $K_2$, and $K_3$, is uniquely defined and is given by 

\begin{eqnarray*}
< [\mu_1^*], [\mu_2^*], [\mu_3^*]> = [\mu_1^* \cup \overset  \thicksim  C_{23} + \overset  \thicksim  C_{12} \cup \mu_3^*] \phantom{.} \in \phantom{.} H^2(S^3 - \overset \circ N(L)),
\end{eqnarray*} 
and by definition, Massey's third-order linking number of $L$, with its components in this order,  is 

\begin{eqnarray*}
< [\mu_1^*], [\mu_2^*], [\mu_3^*]> ([T_1]) =  [\mu_1^* \cup \overset  \thicksim  C_{23} + \overset  \thicksim  C_{12} \cup \mu_3^*] ([T_1]).
\end{eqnarray*}

\begin{theorem} \label{GBF2}
Massey's third order linking number is given by 

\begin{eqnarray*}
[\mu_1^* \cup \overset  \thicksim  C_{23} + \overset  \thicksim  C_{12} \cup \mu_3^*] ([T_1]) 
&=& \partial_\#(\theta_\#(\theta_\#(T_1 \cdot \overline \phi(\mu_1^*)) \cdot C_{23}^*) )+ \partial_\#(\theta_\#(\theta_\#(T_1 \cdot C_{12}^*) \cdot \overline \phi(\mu_3^*)))\\
&=& \# \Big( |T_1| \cap |F_1| \cap|C_{23}| \Big) + \# \Big(|T_1| \cap |C_{12}| \cap |F_3| \Big),  
\end{eqnarray*}
where $C_{12}^*= \overline \phi (\overset  \thicksim  C_{12})$, $C_{23}^*= \overline \phi (\overset  \thicksim  C_{23})$, and $C_{12} = (\theta \circ r \circ i)_{\#}(C_{12}^*)$, $C_{23} = (\theta \circ r \circ i)_{\#}(C_{23}^*)$.
 
\end{theorem}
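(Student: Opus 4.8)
The plan is to prove the two displayed equalities separately, the first being formal and the second geometric.

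\emph{First equality.} Since $|T_1|\subset S^3-\overset{\circ}{N}(L)$ we may regard $T_1\in C_2(S^3-\overset{\circ}{N}(L))$, while $\mu_1^*,\mu_3^*,\tilde C_{12},\tilde C_{23}$ all lie in $C^1(S^3-\overset{\circ}{N}(L))$. Because the Massey representative is a cocycle, its pairing with $[T_1]$ equals the value of the cochain on the cycle $T_1$, and by bilinearity
\begin{eqnarray*}
[\mu_1^*\cup\tilde C_{23}+\tilde C_{12}\cup\mu_3^*]([T_1]) = (\mu_1^*\cup\tilde C_{23})(T_1)+(\tilde C_{12}\cup\mu_3^*)(T_1).
\end{eqnarray*}
I would then apply the relative form of Proposition 3.0.5 (the extension recorded immediately after its proof, with $\overline\phi$ replacing $\phi$) to each summand, taking $(\alpha,\beta)=(\mu_1^*,\tilde C_{23})$ in the first and $(\alpha,\beta)=(\tilde C_{12},\mu_3^*)$ in the second. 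Substituting $C_{23}^*=\overline\phi(\tilde C_{23})$ and $C_{12}^*=\overline\phi(\tilde C_{12})$ reproduces the first displayed equality term by term; no geometry is needed here.

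\emph{Second equality.} For each summand I would follow the model computation of the Example for the ordinary linking number. Recall from the start of Section 4 that $(\theta\circ r\circ i)_\#(\overline\phi(\mu_1^*))=F_1+b_1$ with $b_1\in B_2(S^3,L)$, and write $C_{23}=(\theta\circ r\circ i)_\#(C_{23}^*)$. Using the topological description of the relative intersection product (the Remarks on underlying sets) together with Proposition 3.0.4, the supporting set of the $0$-chain $\theta_\#(\theta_\#(T_1\cdot\overline\phi(\mu_1^*))\cdot C_{23}^*)$ is
\begin{eqnarray*}
|T_1|\cap|F_1+b_1|\cap|C_{23}| = \big(|T_1|\cap|F_1|\cap|C_{23}|\big)\cup\big(|T_1|\cap|b_1|\cap|C_{23}|\big).
\end{eqnarray*}
The first piece contributes precisely the algebraic count $\#(|T_1|\cap|F_1|\cap|C_{23}|)$, so the claim reduces to showing that the second piece is algebraically trivial. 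Decomposing $b_1=\partial\beta_1+\tilde b_1$ with $\beta_1\in C_3(S^3)$ and $\tilde b_1\in C_2(L)$, the $\tilde b_1$ contribution is empty because $|\tilde b_1|\subset|L|$ while $T_1=\partial N(K_1)$ is disjoint from $L$; and the $\partial\beta_1$ contribution should vanish by a bounding argument as in the Example, together with $\partial_\#\circ\partial=0$. Concretely, since $T_1$ is a cycle, $T_1\cdot\overline{\partial\beta_1}=\pm\,\partial(T_1\cdot\overline{\beta_1})$, so this part of the inner $1$-cycle differs from the circle $|T_1|\cap|F_1|$ by a relative boundary, whose augmented intersection with $C_{23}$ ought to be zero.

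\emph{Main obstacle.} The step I expect to require the most care is exactly this vanishing, because, unlike the Seifert surface $F_2$ in the Example, the Massey chain $C_{23}$ is \emph{not} a cycle: Theorem 3.0.2 gives $\overline\partial C_{23}^*=\pm\,\overline\phi(\mu_2^*\cup\mu_3^*)$, whose support records the locus $F_2\cap F_3$. Consequently the naive ``circle meets a closed surface in zero net points'' argument leaves, after integrating by parts, a residual term involving $\overline\partial C_{23}^*$, and I would have to show that this residue also augments to zero --- either because $F_2\cap F_3$ may be placed in general position off $T_1=\partial N(K_1)$, so that the triple intersection $|T_1|\cap|\beta_1|\cap|\partial C_{23}|$ is empty, or because the hypothesis $lk(K_2,K_3)=0$ forces the corresponding count to cancel. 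Granting this, the second summand $\partial_\#(\theta_\#(\theta_\#(T_1\cdot C_{12}^*)\cdot\overline\phi(\mu_3^*)))$ is treated symmetrically, with the factors exchanged and $(\theta\circ r\circ i)_\#(\overline\phi(\mu_3^*))=F_3+b_3$, giving $\#(|T_1|\cap|C_{12}|\cap|F_3|)$; adding the two surviving counts yields the second displayed equality and completes the proof.
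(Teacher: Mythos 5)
Your proposal is correct and follows essentially the same route as the paper's proof: the first equality is obtained exactly as you say, from the relative form of Proposition 3.0.5 applied to each cup-product summand, and the second from the Example-style decomposition $(\theta\circ r\circ i)_\#(\overline\phi(\mu_1^*))=F_1+b_1$, $b_1=\partial\beta_1+\tilde b_1$, with the $\tilde b_1$ piece empty and the $\partial\beta_1$ piece algebraically trivial. The ``main obstacle'' you flag is resolved by precisely the first of your two suggested fixes, and this is what the paper (tersely) invokes: $\partial C_{23}$ is supported in $|F_2\cdot F_3|\cup|L|$, and since two $1$-complexes in a $3$-manifold ($F_2\cap F_3$ and $K_1$) are disjoint in general position, $\partial C_{23}$ misses $T_1=\partial N(K_1)$, so the curves of $|T_1|\cap|\partial\beta_1|$, which bound in $T_1$, have zero algebraic intersection number with the relative $2$-chain $C_{23}$.
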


\begin{proof} 

By Proposition 3.0.5 and its remark, we obtain 

\begin{eqnarray*}
[\mu_1^* \cup \overset  \thicksim  C_{23} + \overset  \thicksim  C_{12} \cup \mu_3^*] ([T_1]) 
&=& ( \mu_1^* \cup \overset  \thicksim  C_{23} + \overset  \thicksim  C_{12} \cup \mu_3^* )(T_1) \\
&=& (\mu_1^* \cup \overset  \thicksim  C_{23}) (T_1)+ (\overset  \thicksim  C_{12} \cup \mu_3^*)(T_1) \\
&= & \partial_\#(\theta_\#(\theta_\#(T_1 \cdot \overline \phi(\mu_1^*)) \cdot \overline \phi ( \overset  \thicksim  C_{23}))) + \partial_\#(\theta_\#(\theta_\#(T_1 \cdot \overline \phi (\overset  \thicksim  C_{12}))) \cdot \overline \phi ( \mu_3^*))\\
&=&\partial_\#(\theta_\#(\theta_\#(T_1 \cdot \overline \phi(\mu_1^*)) \cdot C_{23}^*) )+ \partial_\#(\theta_\#(\theta_\#(T_1 \cdot C_{12}^*) \cdot \overline \phi(\mu_3^*))).
\end{eqnarray*} 

Now the underlying set 

\begin{eqnarray*}
|\theta_\#(\theta_\#(T_1 \cdot \overline \phi(\mu_1^*)) \cdot C_{23}^*)|
&=& | \theta_\#(T_1 \cdot \overline \phi(\mu_1^*))| \cap |(\theta_\# \circ r_\# \circ i_\#)(C_{23}^*)|\\
&=&   |T_1| \cap (|F_1| \cup |b_1|) \cap|C_{23}| \\
&=&  \Big( |T_1| \cap |F_1| \cap|C_{23}| \Big) \cup \Big(  |T_1| \cap |b_1| \cap|C_{23}| \Big), 
\end{eqnarray*} 
and similarly $|\theta_\#(\theta_\#(T_1 \cdot C_{23}^*) \cdot \overline \phi(\mu_1^*))| = \Big( |T_1| \cap |C_{12}| \cap (|F_3|  \cup |b_3| ) \Big)$, where $b_1$ and $b_3$ are as in the above Example.
since $b_1 = \partial \beta_1 + \overset  \thicksim {b_1}$, where $\beta_1 \phantom{.} \epsilon \phantom{.} C_3(S^3)$ and $ \overset  \thicksim {b_1} \phantom{.} \epsilon \phantom{.} C_2(L)$,
$ |T_1| \cap |b_1| \cap|C_{23}| =  (|T_1| \cap |\partial \beta_1| \cap|C_{23}|) \cup  (|T_1| \cap |\overset  \thicksim {b_1}| \cap|C_{23}|)$.  Now $|T_1| \cap |\partial \beta_1| $ consists of disjoint curves that bound in $T_1$, and $C_{23} \phantom{.}\epsilon \phantom{.} C_2(S^3,L)$, it follows that $\# (|T_1| \cap |\partial \beta_1| \cap|C_{23}|) = 0$.  Also $|T_1| \cap |\overset  \thicksim {b_1}| \cap|C_{23}| = \phi$(the empty set), since $|T_1| \cap |\overset  \thicksim {b_1}| = \phi$.  Thus we have proved $\partial_\#(\theta_\#(\theta_\#(T_1 \cdot \overline \phi(\mu_1^*)) \cdot C_{23}^*) ) = \# \Big( |T_1| \cap |F_1| \cap|C_{23}| \Big)$.  A similar arguments can be applied to show that $\partial_\#(\theta_\#(\theta_\#(T_1 \cdot C_{12}^*) \cdot \overline \phi(\mu_3^*))) =  \# \Big(|T_1| \cap |C_{12}| \cap |F_3| \Big)$.

\phantom{.}

\end{proof}

\emph{Remark.}  It is clear that the third order linking number is independent of the choice of  $b_i \phantom{.} \epsilon \phantom{.} B_2(S^3,L)$ with which $(\theta \circ r \circ i)_{\#}(\overline {\phi}(\mu_i^*)) = F_i + b_i$. 
The same is true for the choices of  $\overset  \thicksim  C_{12}$ and  $\overset  \thicksim  C_{23}$ in $C^1( S^3 - \overset \circ {N(L)})$, which can be verified as follows.  Assume $\overset  \thicksim  C_{12}'$ and $\overset  \thicksim  C_{23}'$ are another choices with which $\mu_1^* \cup \mu_2^* = \delta \overset  \thicksim  C_{12}'$ and $\mu_2^* \cup \mu_3^* = \delta \overset  \thicksim  C_{23}'$.  For the first case we show that if $C_{12}'^* = \overline \phi ( \overset  \thicksim  C_{12}')$, then

\begin{eqnarray*}
\partial_\# (\theta_\#(\theta_\#(T_1 \cdot C_{12}^*) \cdot  \overline {\phi}(\mu_3^*))) = \partial_\#(\theta_\#(\theta_\#(T_1 \cdot C_{12}'^*) \cdot  \overline {\phi}(\mu_3^*))).
\end{eqnarray*} 
Now $\delta  (\overset  \thicksim  C_{12} - \overset  \thicksim  C_{12}' )= 0$ in $C^2(S^3 - \overset \circ N(L))$  implies  $\partial (C_{12}^* - C_{12}'^*) = 0$ in  \phantom{.} $C_1((S^3)^*,L^*)$.  Thus

\begin{eqnarray*}
\partial (\theta _\#( T_1 \cdot (C_{12}^* - C_{12}'^*))) &=& \theta _\#( \partial ( T_1 \cdot (C_{12}^* - C_{12}'^*))\\
&=& \theta _\# \big( - \partial T_1 \cdot (C_{12}^* - C_{12}'^*) +  T_1 \cdot 
(\partial (C_{12}^* - C_{12}'^*)) \big)\\ 
&=&  0.
\end{eqnarray*}
This shows that $\theta _\#( T_1 \cdot (C_{12}^*)$ and $\theta _\#( T_1 \cdot (C_{12}'^*)$ are each a collection of the same arcs in $C_1(S^3,L)$, i.e. same arcs in $S^3$ rel $L$.  But $T_1 = \partial N(K_1)$, so they are actually a collection of the same arcs in $S^3$.
We then have

\begin{eqnarray*}
\partial_\# \big (\theta_\#(\theta_\#(T_1 \cdot (C_{12}^* - C_{12}'^*)) \cdot  \overline {\phi}(\mu_3^*)) \big) &=& \theta_\# \big( \partial( \theta _\#( T_1 \cdot (C_{12}^* - C_{12}'^*)) ) \cdot \overline {\phi}(\mu_3^*) \big)  \\
&=&  0.
\end{eqnarray*}

\phantom{.}

Similarly we can show $\partial_\#(\theta_\#(\theta_\#(T_1 \cdot \overline {\phi}(\mu_1^*)) \cdot C_{23}^*) = \partial_\#(\theta_\#(\theta_\#(T_1 \cdot \overline {\phi}(\mu_1^*)) \cdot C_{23}'^*)$. 

\phantom{.}

The geometric topology meaning of $C_{12}$ and $C_{23}$ are as follows.  By Proposition 3.0.7 we have, on one hand, 

\begin{eqnarray*}
( r_* \circ i_*) ([ \overline \phi (\mu_1^* \cup \mu_2^*)] = \theta_*(r_*(i_*([ F_1]))) \cdot [F_2] = [\theta _\#(r_\#(i_\#(F_1))) \cdot F_2].
\end{eqnarray*} 
On the other hand,

\begin{eqnarray*}
( r_* \circ i_*)([ \overline \phi (\mu_1^* \cup \mu_2)] &=& ( r_* \circ i_*)([\overline \phi (\delta  \overset  \thicksim C_{12})]) \\ 
&=& ( r_* \circ i_*)([\partial \overline \phi (\overset  \thicksim C_{12})])\\ 
&=& ( r_* \circ i_*)([\partial C_{12}^*])\\
&=& [\partial  (( r \circ i)_\#(C_{12}^*))].
\end{eqnarray*}  
Thus we have $\partial ( ( r \circ i)_\#(C_{12}^*) )= \theta _\#(r_\#(i_\#(F_1))) \cdot F_2 \phantom{.}+ \phantom{.} b_{12}' $, where $ b_{12}' \phantom{.} \epsilon \phantom{.} B_1((S^3)', L')$.  Applying $\theta_\#$ on both sides we obtain $\partial ( ( \theta \circ r \circ i)_\#(C_{12}^*) )= \theta _\#((\theta_\# ( r_\#(i_\#(F_1))) \cdot F_2 \phantom{.}+ \phantom{.} b_{12} $, where $ b_{12} \phantom{.} \epsilon \phantom{.} B_1(S^3, L)$.  Whence
$b_{12} = \partial \beta_{12} + \overset  \thicksim b_{12}$,  with $\beta_{12} \phantom{.} \epsilon \phantom{.} C_2(S^3)$ and $\overset  \thicksim b_{12} \phantom{.} \epsilon \phantom{.} C_1(L)$.  Now since ($ \theta \circ r \circ i)_\#$ is the "retraction" from $((S^3)^*, L^*)$ to $(S^3,L)$, $C_{12}$ can be viewed as a 2-complex in $(S^3,L)$ whose boundary, after adding to the boundary of some 2-complex in $(S^3,L)$, is the intersection of $F_1$(which is identified with $(\theta \circ r \circ i)_\#(F_1)$) and $F_2$, rel $L$.  One such candidate for $C_{12}$ is any spanning surface for the new link components formed by the arcs and circles of $F_1 \cdot F_2$ and portions of components of $L$.  Note that if $C_{12}'$ is another choice of such spanning surface, then $\partial C_{12}' = \partial C_{12}$, so $C_{12}' - C_{12} = \bar \partial \bar b$ for some $\bar b \phantom{.}\epsilon\phantom{.} C_3((S^3)^*, L^*)$.  By letting $\overset  \thicksim C_{12}' = \bar \phi ^{-1} (C_{12}' )$ and $v=\bar \phi ^{-1} (\bar b) \phantom{.}\epsilon\phantom{.} C^0(S^3 - \overset \circ N(L))$, then $\overset \thicksim C_{12}' - \overset \thicksim C_{12} = \bar \phi ^{-1}(C_{12}' - C_{12}) = \bar \phi ^{-1}(\bar \partial \bar b)) =(\bar \phi ^{-1}\bar \partial ) (\bar \phi(v)) = \delta v$.  Thus the third-order linking is independent of the choice of $C_{12}$.  Similar choice can be made for $C_{23}$, which is any spanning surface for the new link components formed by arcs and circles of the intersections of $F_2$ and  $F_3$ rel $L$.  Therefore  $C_{12}$ and $C_{23}$ are constructed from the Seifert surfaces $F_1, F_2,F_3$,  and their boundaries $L$, and in general they are each a collection of surfaces.

\phantom{.} 

\emph{Remark.}  The first term in Theorem 4.0.8 can be interpreted as $lk(K_1, \partial C_{23})$.

\vspace{0.3in}

Next we consider the fourth-order linking.  Let $L = K_1 \cup K_2 \cup K_3 \cup K_4$ be an oriented link of four components in the given specific order with meridians $\mu_1, \mu_2,\mu_3, \mu_4$ respectively.  Assume all the second-order and  all third-order Massey products vanish, i.e. $\mu_i^* \cup \mu_j^*= 0$ and $<\mu_i^*,\mu_j^*,\mu_k^*> = 0$ for  all permutations $(i,j)$ and $(i,j,k)$ of $\{1,2,3,4\}$.  Then in particular we have $\mu_1^* \cup\mu_2^* = \delta \overset  \thicksim  C_{12}$, $\mu_2^* \cup\mu_3^* = \delta \overset  \thicksim  C_{23}$, and $\mu_3^* \cup\mu_4^* = \delta \overset  \thicksim  C_{34}$ for some  $\overset  \thicksim  C_{12} , \overset  \thicksim  C_{23},$ and $\overset  \thicksim  C_{34} \phantom{.} \epsilon \phantom{.} C^1(S^3 - \overset \circ N(L))$; also $<\mu_1^*,\mu_2^*,\mu_3^*> = 0$ implies $\mu_1^* \cup \overset  \thicksim  C_{23} + \overset  \thicksim  C_{12} \cup \mu_3^* = \delta \overset  \thicksim  C_{123}$
and $<\mu_2^*,\mu_3^*,\mu_4^*> = 0$ implies   $\mu_2^* \cup \overset  \thicksim  C_{34} + \overset  \thicksim  C_{23} \cup \mu_4^* = \delta \overset  \thicksim  C_{234}$
for some  $\overset  \thicksim  C_{123}$ and  $\overset  \thicksim  C_{234}\phantom{.} \epsilon \phantom{.} C^1(S^3 - \overset \circ N(L))$.  Then the Massey fourth-order product of $L$, in this specific order of components $K_1$, $K_2$, $K_3$ and $K_4$, is uniquely defined and is given by 

\begin{eqnarray*}
< [\mu_1^*], [\mu_2^*], [\mu_3^*], [\mu_4^*]> = [\mu_1^* \cup \overset  \thicksim  C_{234} + \overset  \thicksim  C_{12} \cup \overset  \thicksim  C_{34} + \overset  \thicksim  C_{123} \cup \mu_4^*] \phantom{.} \epsilon \phantom{.} H^1(S^3 - \overset \circ N(L)),
\end{eqnarray*} 
and by definition, Massey's fourth-order linking number of $L$, with its components in this order,  is 

\begin{eqnarray*}
< [\mu_1^*], [\mu_2^*], [\mu_3^*], [\mu_4^*]> ([T_1]) =  [\mu_1^* \cup \overset  \thicksim  C_{234} + \overset  \thicksim  C_{12} \cup \overset  \thicksim  C_{34} + \overset  \thicksim  C_{123} \cup \mu_4^*] ([T_1]).
\end{eqnarray*} 

By a similar discussion as in the case of third-order linking, we obtain

\begin{theorem} \label{GBF2}
Massey's fourth-order linking number is given by 

\begin{eqnarray*}
[\mu_1^* \cup \overset  \thicksim  C_{234} + \overset  \thicksim  C_{12} \cup \overset  \thicksim  C_{34} + \overset  \thicksim  C_{123} \cup \mu_4^*] ([T_1]) &=& \partial_\#(\theta_\#(\theta_\#(T_1 \cdot \bar \phi(\mu_1^*)) \cdot C_{234}^*) \\
&\phantom{....}& \phantom{.}+\partial_\#(\theta_\#(\theta_\#(T_1 \cdot C_{12}^*) \cdot C_{34}^*)) \\
&\phantom{....}& \phantom{...}+  \partial_\#(\theta_\#(\theta_\#(T_1 \cdot C_{123}^*) \cdot \bar \phi(\mu_4^*)))\\
&=& \# \Big( |T_1| \cap |F_1| \cap|C_{234}| \Big) \\
&\phantom{........}& \phantom{..}+ \# \Big(|T_1| \cap |C_{12}| \cap |C_{34}| \Big)\\ 
&\phantom{........}& \phantom{.....}+ \# \Big(|T_1| \cap |C_{123}| \cap |F_{4}| \Big),
\end{eqnarray*} 
where $ C_{12}^* = \overline \phi ( \overset  \thicksim  C_{12}), C_{34}^* = \overline \phi ( \overset  \thicksim  C_{34}), C_{123}^* = \overline \phi ( \overset  \thicksim  C_{123}), C_{234}^* = \overline \phi ( \overset  \thicksim  C_{234})$, and $ C_{12}=(\theta \circ r \circ i)_\#(C_{12}^*), C_{34}= (\theta \circ r \circ i)_\#(C_{34}^*), C_{123}=(\theta \circ r \circ i)_\#(C_{123}^*), C_{234}=(\theta \circ r \circ i)_\#(C_{234}^*)$.  
\end{theorem}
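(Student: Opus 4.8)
The plan is to mirror, step by step, the proof of the third-order formula (Theorem 4.0.8), since all three summands of the fourth-order expression are cup products of $1$-cochains lying in $C^1(S^3-\overset \circ N(L))$ paired against the $2$-cycle $T_1=\partial N(K_1)$. First I would replace the evaluation of the cohomology class on the homology class $[T_1]$ by the evaluation of the representing cochain $\mu_1^*\cup\tilde C_{234}+\tilde C_{12}\cup\tilde C_{34}+\tilde C_{123}\cup\mu_4^*$ on the cycle $T_1$, and then use additivity of the cup product to split the computation into the three terms $(\mu_1^*\cup\tilde C_{234})(T_1)$, $(\tilde C_{12}\cup\tilde C_{34})(T_1)$, and $(\tilde C_{123}\cup\mu_4^*)(T_1)$.

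Next I would apply Proposition 3.0.5, in the relative form noted immediately after its proof (valid for cochains in $C^1(S^3-\overset \circ N(L))$ and for $T_1\in C_2(S^3-\overset \circ N(L))$, which holds since $T_1$ is the boundary torus of $N(K_1)$), to each of the three terms. This converts each $(\alpha\cup\beta)(T_1)$ into the double intersection-product form $\partial_\#(\theta_\#(\theta_\#(T_1\cdot\bar\phi(\alpha))\cdot\bar\phi(\beta)))$. Rewriting $\bar\phi(\tilde C_{12})=C_{12}^*$, $\bar\phi(\tilde C_{34})=C_{34}^*$, $\bar\phi(\tilde C_{123})=C_{123}^*$, $\bar\phi(\tilde C_{234})=C_{234}^*$ then reproduces the first displayed equality of the theorem verbatim.

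For the second equality I would compute the underlying set of each term exactly as in the third-order case, using the two identifications $(\theta\circ r\circ i)_\#\bar\phi(\mu_i^*)=F_i+b_i$ (so its support is $|F_i|\cup|b_i|$) and $(\theta\circ r\circ i)_\#C_{\bullet}^*=C_{\bullet}$. The first and third terms each carry one meridian factor, hence one boundary-type error: the support of the first term is $|T_1|\cap(|F_1|\cup|b_1|)\cap|C_{234}|$ and that of the third is $|T_1|\cap|C_{123}|\cap(|F_4|\cup|b_4|)$. Writing $b_1=\partial\beta_1+\tilde b_1$ and $b_4=\partial\beta_4+\tilde b_4$ with $\beta_i\in C_3(S^3)$ and $\tilde b_i\in C_2(L)$, the $b_1$ and $b_4$ contributions vanish by the two vanishing arguments already used in the Example and in Theorem 4.0.8: for the inner meridian the set $|T_1|\cap|\partial\beta_1|$ consists of curves bounding in $T_1$, which have algebraic intersection $0$ with the relative $2$-chain $C_{234}$; for the outer meridian the $1$-chain $|T_1|\cap|C_{123}|$ has algebraic intersection $0$ with the nullhomologous closed surface $|\partial\beta_4|$; and in both cases $|T_1|\cap|\tilde b_i|=\emptyset$ since $\tilde b_i\subset L$ is disjoint from $T_1$. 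This leaves $\#(|T_1|\cap|F_1|\cap|C_{234}|)$ and $\#(|T_1|\cap|C_{123}|\cap|F_4|)$.

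The genuinely new ingredient, and the step I expect to be the main obstacle, is the middle cross term $(\tilde C_{12}\cup\tilde C_{34})(T_1)$: unlike the third-order situation neither factor is a meridian dual, so I must invoke the relative extension of Proposition 3.0.5 with both arguments equal to constructed cochains and then verify that its support collapses cleanly to $|T_1|\cap|C_{12}|\cap|C_{34}|$, with no boundary-error correction since no $F_i+b_i$ splitting enters. The delicate point running through all three terms is the legitimacy of these underlying-set identities, namely the transfer of intersection supports from the block-dissection picture $((S^3)^*,L^*)$ back to $(S^3,L)$ via $(\theta\circ r\circ i)_\#$, which has to be justified from the remarks on supports of relative intersection products; once that is in place the three counts assemble into the asserted formula. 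Independence of the value from the choices of the bounding cochains $\tilde C_{12},\tilde C_{34},\tilde C_{123},\tilde C_{234}$ and of the $b_i$ is handled exactly as in the remark following Theorem 4.0.8 and is not needed for the equality itself.
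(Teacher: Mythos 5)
Your proposal is correct and takes essentially the same approach as the paper: the paper's entire proof of the fourth-order theorem is the phrase ``by a similar discussion as in the case of third-order linking,'' and your argument is exactly that discussion carried out in detail --- additivity of the cup product, the relative form of Proposition 3.0.5 applied to each of the three terms, and the two vanishing arguments (curves bounding in $T_1$ against a relative $2$-chain, and $|T_1|\cap|\tilde b_i|=\emptyset$) disposing of the $b_1$ and $b_4$ error terms. Your observation that the middle term $(\tilde C_{12}\cup\tilde C_{34})(T_1)$ involves no $F_i+b_i$ correction, since both factors are by definition the pushforwards $(\theta\circ r\circ i)_\#C_{\bullet}^*$, is likewise consistent with how the paper treats the constructed cochains in the third-order proof.
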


Geometric interpretations of $C_{123}$ and $C_{234}$ can be made similar to that of $C_{12}$ and $C_{23}$ in the case of third-order linking.   For $C_{123}$, we have

\begin{eqnarray*}
( r_* \circ i_*) ([ \overline \phi (\mu_1^* \cup  \overset \thicksim  C_{23} +  \overset \thicksim  C_{12} \cup \mu_3^*  )]  &=& \theta_*(i_*([ \bar \phi (\mu_1^*)])) \cdot [\bar \phi (\overset \thicksim  C_{23})] +\theta_*(i_*([\bar \phi(\overset \thicksim  C_{12})]) \cdot [\bar \phi  (\mu_3^*)])] \\ &=& [\theta _\#(i_\#(F_1)) \cdot C_{23}^* + \theta _\#(i_\#(C_{12}^*)) \cdot F_3],
\end{eqnarray*} 
and the left hand side is equal to

\begin{eqnarray*}
( r_* \circ i_*) ([  \bar \phi ( \delta \overset  \thicksim C_{123})] 
&=& ( r_* \circ i_*) ([ \partial  \bar \phi ( \overset  \thicksim C_{123})] \\ 
&=& ( r_* \circ i_*)([\partial C_{123}^*])\\ 
&=& [\partial  ( r \circ i)_\#(C_{123}^*)].\\
\end{eqnarray*}  
Thus we have $\partial  ( r \circ i)_\#(C_{123}^*)   = \theta _\#(i_\#(F_1)) \cdot C_{23}^* + \theta _\#(i_\#(C_{12}^*)) \cdot F_3\phantom{.} + \phantom{.}b_{123}'$, where $b_{123}' \phantom{.} \epsilon \phantom{.} B_1({S^3}', L')$.  Applying $\theta_\#$ on both sides we obtain $\partial ( ( \theta \circ r \circ i)_\#(C_{123}^*) )= \theta _\#((\theta_\# ( r_\#(i_\#(F_1))) \cdot C_{23}^* \phantom{.}+  \theta _\#(\theta _\#(i_\#(C_{12}^*)) \cdot F_3)\phantom{.}+\phantom{.} b_{123} $, where $ b_{123} \phantom{.} \epsilon \phantom{.} B_1(S^3, L)$.  Since
$b_{123} = \partial \beta_{123} + \overset  \thicksim b_{123}$,  with $\beta_{123} \phantom{.} \epsilon \phantom{.} C_2((S^3)')$ and $\overset  \thicksim b_{123} \phantom{.} \epsilon \phantom{.} C_2(L')$, and  since ($ r \circ i)_\#$ is the "retraction" from $((S^3)^*, L^*)$ to $(S^3,L)$,
 $C_{123}=( \theta \circ r \circ i)_\#(C_{123}^*) $ can be viewed as a 2-complex in $(S^3,L)$ whose boundary, after attaching to the boundary of some 2-complex in $(S^3,L)$, is the sum of the intersection of $F_1$(which is identified with $( \theta \circ r \circ i)_\#(F_1)$) and $C_{23}$ and the intersection of $C_{12}$ and $F_3$, rel $L$.  One such candidate for $C_{123}$ can be taken to be  any spanning surface for the new link components formed by the arcs and circles of $F_1 \cdot C_{23}$ and  $C_{12} \cdot F_3$ and portions of components of $L$.  Similar choice can be made for $C_{234}$, which is any spanning surface for the new link components formed by arcs and circles of $F_2 \cdot C_{34}$ and  $C_{23} \cdot F_4$ and portions of components of $L$.  Thus $C_{123}$ and $C_{234}$ are reductively constructed from $C_{12}$, $C_{23}$, and $C_{34}$, which are constructed from the previous case of third-order linking.
 
\phantom{.} 

\emph{Remark.} A formula for Massey's linking number of order $\ge 5$ can be obtained inductively when its linking numbers of order $\le n-1$ all vanish, following the pattern in the third- and fourth-order linkings.  For example, the fifth order linking number is given by 

\phantom{.} 

$[\mu_1^* \cup \overset  \thicksim  C_{2345} + \overset  \thicksim  C_{12} \cup \overset  \thicksim  C_{345} + \overset  \thicksim  C_{123} \cup \overset  \thicksim  C_{45} + \overset  \thicksim  C_{1234} \cup \mu_5^*] ([T_1])$

\begin{eqnarray*}
&=& \partial_\#(\theta_\#(\theta_\#(T_1 \cdot \bar \phi(\mu_1^*)) \cdot C_{2345}^*) \\
&\phantom{....}& \phantom{.}+\partial_\#(\theta_\#(\theta_\#(T_1 \cdot C_{12}^*) \cdot C_{345}^*)) \\
&\phantom{....}& \phantom{...}+\partial_\#(\theta_\#(\theta_\#(T_1 \cdot C_{123}^*) \cdot C_{45}^*)) \\
&\phantom{....}& \phantom{.....}+  \partial_\#(\theta_\#(\theta_\#(T_1 \cdot C_{1234}^*) \cdot \bar \phi(\mu_5^*)))\\
&=& \# \Big( |T_1| \cap |F_1| \cap|C_{2345}| \Big) \\
&\phantom{........}& \phantom{..}+ \# \Big(|T_1| \cap |C_{12}| \cap |C_{345}| \Big)\\ 
&\phantom{........}& \phantom{.....}+ \# \Big(|T_1| \cap |C_{123}| \cap |C_{45}| \Big)\\
&\phantom{........}& \phantom{.......}+ \# \Big(|T_1| \cap |C_{234}| \cap |F_{5}| \Big),
\end{eqnarray*} 
where $ \overset  \thicksim  C_{12}, C_{12} , \overset  \thicksim  C_{45}, C_{45}, \overset  \thicksim  C_{123}, C_{123}, \overset  \thicksim  C_{345}$ and $C_{345}$ are similarly obtained as before, and $\overset  \thicksim  C_{1234}$ and $ \overset  \thicksim  C_{2345}$ satisfy $\mu_1^* \cup \overset  \thicksim  C_{234} + \overset  \thicksim  C_{12} \cup \overset  \thicksim  C_{34} + \overset  \thicksim  C_{123} \cup \mu_4^* = \delta \overset  \thicksim  C_{1234}$ and $\mu_2^* \cup \overset  \thicksim  C_{345} + \overset  \thicksim  C_{23} \cup \overset  \thicksim  C_{45} + \overset  \thicksim  C_{234} \cup \mu_5^* = \delta \overset  \thicksim  C_{2345}$. 

\section{A combinatorial algorithm for computing Massey numbers} 
  
  In this section we give an algorithm for computing the third-order linking number by first constructing $\partial C_{12}$ and $\partial C_{23}$, which will be oriented links in $R^3$ and from which $C_{12}$ and $C_{23}$ can be constructed as the Seifert surfaces spanning $\partial C_{12}$ and $\partial C_{23}$, respectively.  We will see that this procedure can be generalized to the 4th and higher order linkings.  The construction will be facilitated by the use of Seifert disks for $F_1$ and $F_2$, which we discuss in the following.
  
  \phantom{.}
  
  Let $S^3$ be given the usual right-hand orientation, and let $K_1$, $K_2$ be oriented knots with oriented Seifert surfaces $F_1$ and $F_2$ respectively.  Then $F_1$ and $F_2$ are each a union of Seifert disks and half-twist bands.  We may assume that they are in general position.

\phantom{.}

Consider $F_1 \cdot F_2$.  By an isotopy we may assume that the Seifert disks of $F_1$ intersect only with Seifert disks of $F_2$ and vice versa.  This can be done by pushing Seifert disks of $F_1$ away from the twist bands of $F_2$, and pushing Seifert disks of $F_2$ away from the twist bands of $F_1$.  By using an innermost disk argument, we may also assume that no curve of intersection of any two Seifert disks is a circle.  In this case, we say that $F_1$ and $F_2$ are in {\it normal position}.  When $F_1$ and $F_2$ are in normal position, a component of  $F_1 \cdot F_2$ is either a curve joining $K_1$ and $K_2$ or a curve joining two points in $K_1$ or two points in $K_2$.  See the figure given below. 

\begin{figure}[h]
	\centering
		\scalebox{0.5}{\includegraphics{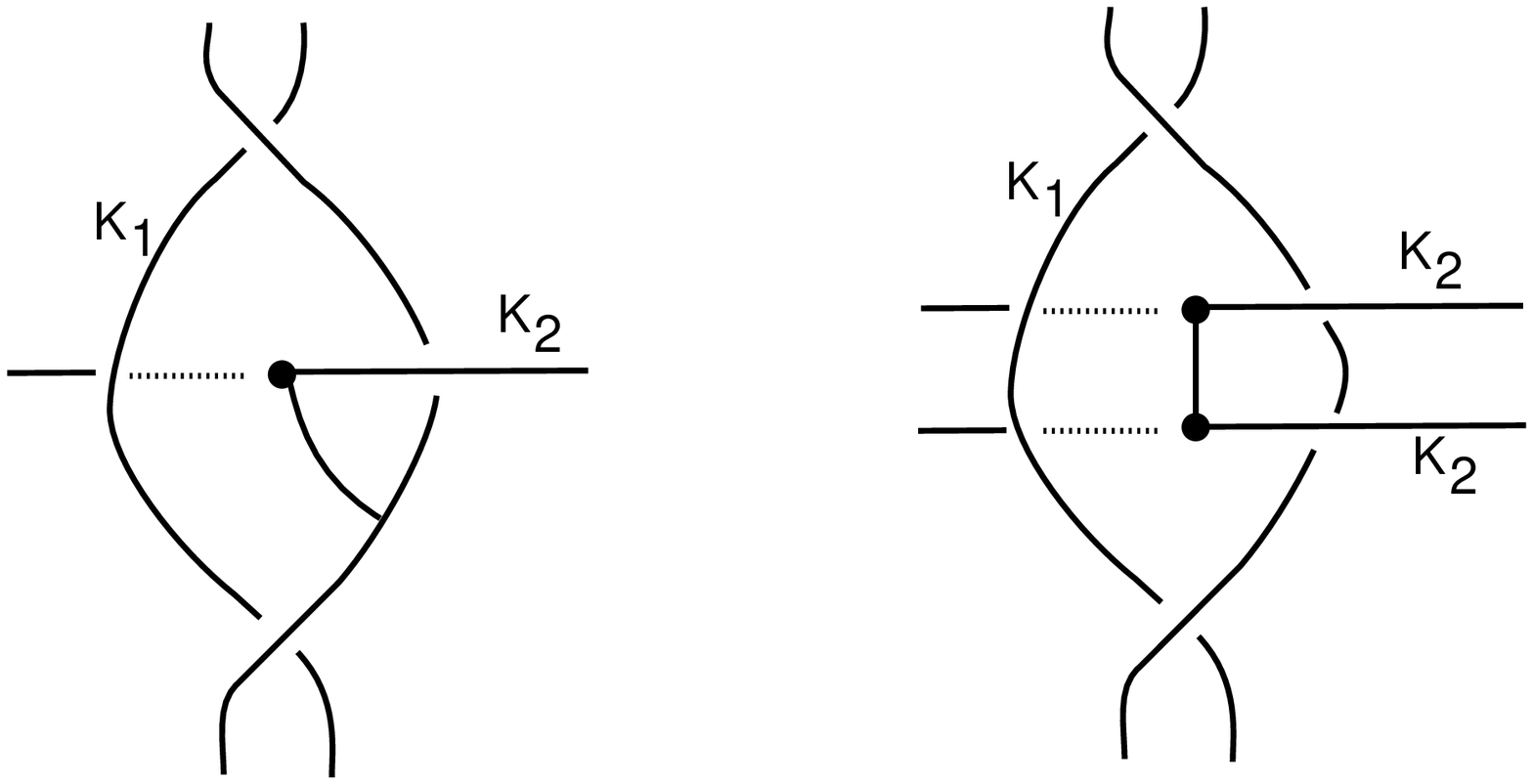}}
	\label{fig:Figure 1}

\end{figure}

The intersection theory at chain-cochain level developed earlier can be applied to show that, a curve of intersection of a Seifert disk of $F_1$ and a Seifert disk of $F_2$ has the following orientation determined by that of $F_1$ and $F_2$:  For $i=1,2$, let $d_i$ be a Seifert disk of $F_i$ and let $\alpha$ be a curve of $d_1 \cdot d_2$ (in this order).  The orientation on $\alpha$ is the one satisfying the diagramatic convention depicted in the following figures.

\phantom{.}

\newpage

(1)

\begin{figure}[h]
	\centering
		\scalebox{0.5}{\includegraphics{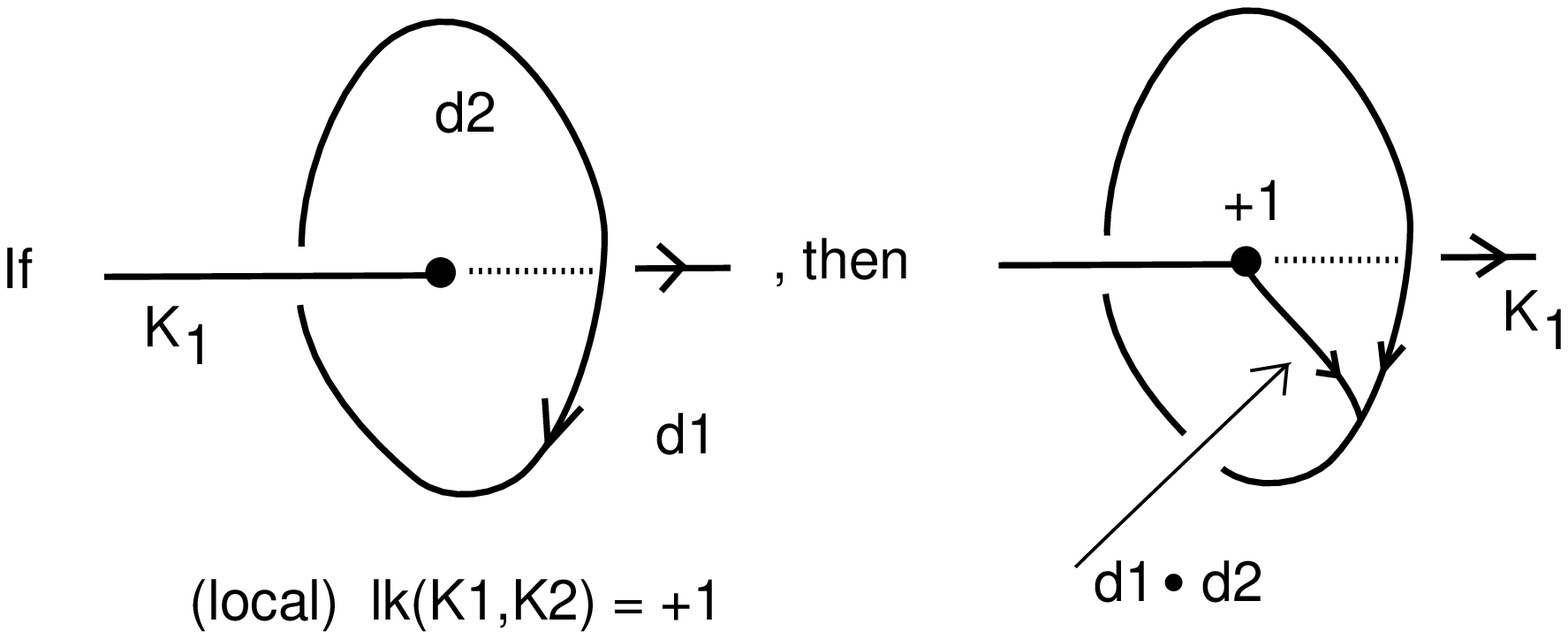}}
	\label{fig:Figure 2}
	
\end{figure}

That is, if the local linking of $K_1$ and $K_2$ is $+1$, then the curve $d_1 \cdot d_2$ (in this order) goes "out" from the point of intersection of $K_1$ and $d_2$ labeled with $+1$ to the boundary of $d_2$.  Note that the disk $d_1$ in the diagram can be on either side of $K_1$.

\phantom{.}

(2)

\begin{figure}[h]
	\centering
		\scalebox{0.5}{\includegraphics{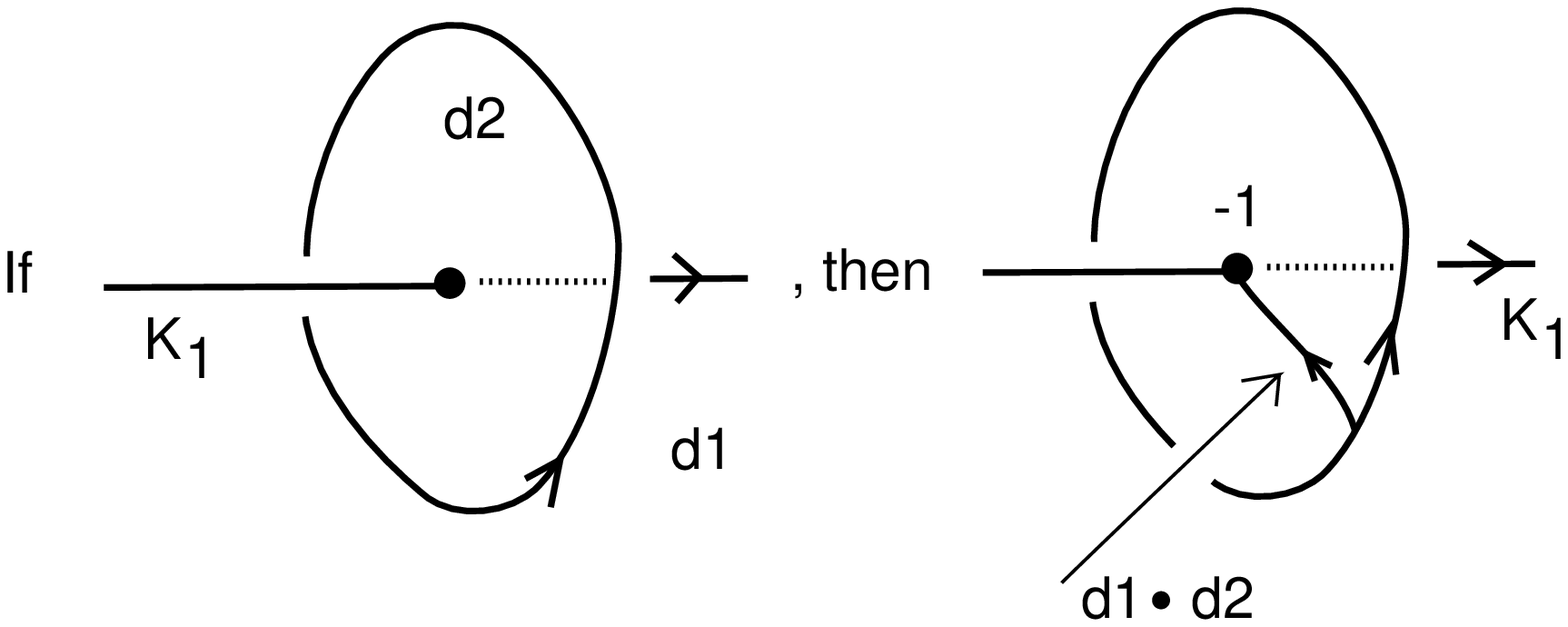}}
	\label{fig:Figure 3}
	
\end{figure}

Thus if the local linking of $K_1$ and $K_2$ is $-1$, then the curve $d_1 \cdot d_2$ (in this order) goes 
"into"  the point of intersection of $K_1$ and $d_2$ labeled with $-1$ from a boundary point of $d_2$.

\phantom{.}  

(3)

\begin{figure}[h]
	\centering
		\scalebox{0.5}{\includegraphics{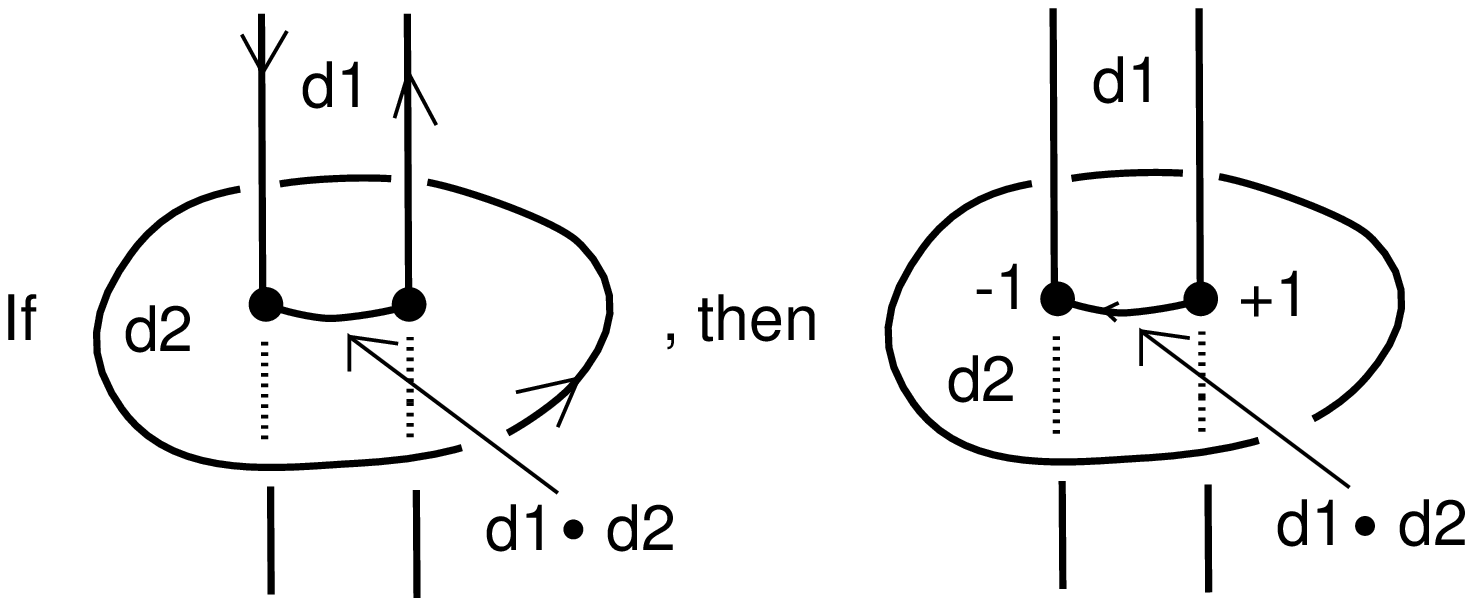}}
	\label{fig:Figure 4}
	\end{figure}

\phantom{.}

  In this case $d_1 \cdot d_2$ goes from the point of intersection labeled with $+1$ to the point of intersection labeled with $-1$.  Note that when considering $F_1 \cdot F_2$ we need only consider Seifert disks of $F_2$, and see how $K_1$ intersects these disks.
  
\phantom{.}
  
  Now Massey's Third-order linking number is $<[\mu_1^*],[\mu_2^*],[\mu_3^*]> ([T_1]) = [\mu_1^* \cup \overset \sim C_{23} + \overset \sim C_{12} \cup \mu_3^*]([T_1]) = \partial_\#(\theta_\#(T_1 \cdot F_1) \cdot C_{23}) + \partial_\#(\theta_\#(T_1 \cdot C_{12}) \cdot F_3)$, where $C_{12} \phantom{.} \epsilon \phantom{.} C_2(S^3, K_1 \cup  K_2)$ with $|\partial C_{12}| \subseteq |(F_1 \cdot F_2 )| \cup |K_1 \cup K_2|$, and $C_{23} \phantom{.} \epsilon \phantom{.}C_2(S^3, K_2 \cup K_3)$ with $\partial |C_{23}| \subseteq |(F_2 \cdot F_3)| \cup |K_2 \cup K_3|$. 
  
\phantom{.}    
   
 The following procedure gives the construction of $\partial C_{12}$, each component of which is a simple closed curve in $R^3$, from which $C_{12}$ can be constructed as the Seifert surface spanning $\partial C_{12}$.  The same procedure can be used to construct $\partial C_{23}$ and therefore $C_{23}$.  Assume $F_1$ and $F_2$ are in normal position.  Practically we will only need the points of intersection of $K_1$ with Seifert disks of $F_2$ with their $\pm1$ labeling.
  
\phantom{.}
  
  Note that $lk(K_1,K_2)=0$ implies that the number of geometric intersections of $K_1$ with Seifert disks of $F_2$ is even, and the algebraic intersection number is 0.
  
\phantom{.}
  
  (1)  As a starting point, choose $p$ to be any point of intersection of $K_1$ with a Seifert disk of $F_2$ that is labeled $-1$, there are two possibilities, see Figure 1(a) and 1(b).
    
\begin{figure}[h]
	\centering
		\scalebox{0.5}{\includegraphics{Figure1.eps}}
	\label{fig:Figure 1}
	\caption*{Figure 1}
\end{figure}

  (2)  Follow the orientation of $K_1$ until meeting the next Seifert disk of $F_2$.
If the intersection point is labeled with $-1$, skip and continue traveling along $K_1$ until meeting the next Seifert disk of $F_2$. 
  
Continue this process (repeatedly going back to (2) if the intersecting point encountered is labeled with $-1$) until an interesection point labeled with $+1$ is encountered.  Such an intersection point exists since the algebraic intersection number of $K_1$ and Seifert disks of $F_2$ is 0.  Then there is an oriented arc of $F_1 \cdot F_2$ contained in this Seifert disk of $K_2$.
  
\phantom{.}  
  
 (3)  Follow the orientation of the oriented arc to reach either
 \phantom{.} 
  
 (3.1) another point of intersection of $K_1$ with the same Seifert disk of $F_2$ labeled with $-1$, see Figure 2;  or
 
\begin{figure}[h]
	\centering
		\scalebox{0.5}{\includegraphics{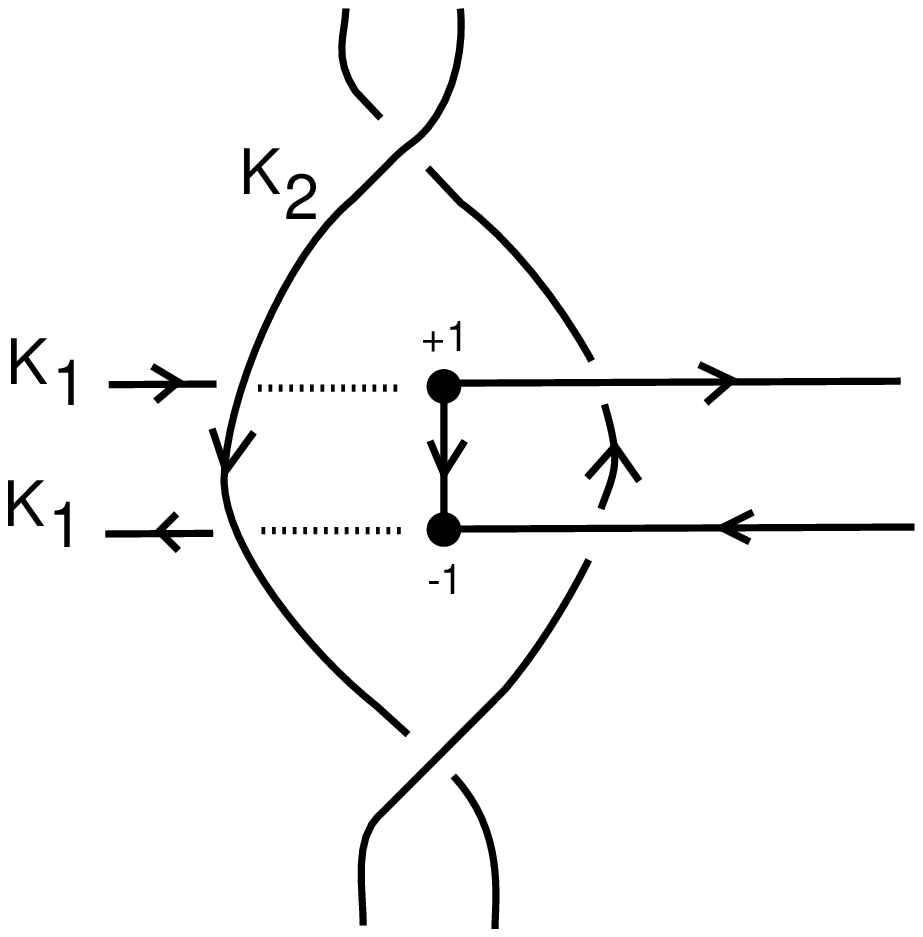}}
	\label{fig:Figure 5}
	\caption*{Figure 2}
\end{figure}

 (3.2) a point in $K_2$, see Figure 3.

\begin{figure}[h]
	\centering
		\scalebox{0.5}{\includegraphics{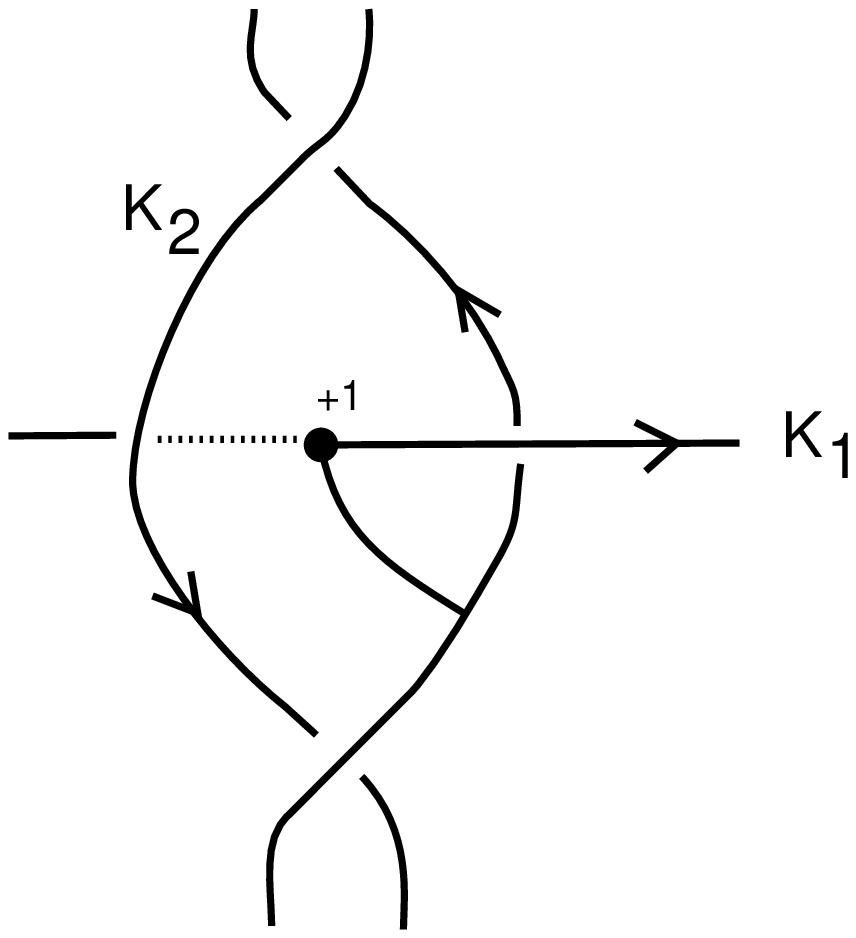}}
	\label{fig:Figure 6}
	\caption*{Figure 3}
\end{figure}

In case of (3.1), go back to (1) and continue the procedure, and proceeds with the condition that when a $+1$ intersection point is encountered, it needs be checked whether the point has been encountered previously,  skip and continue if encountered previously.
 
 \phantom{.} 
 
 In case of (3.2),  follow the orientation of $K_2$, until it either
 
 \phantom{.}  
 
 (3.2.1)  reaches another intersection arc of $F_1 \cdot F_2$ lying in this Seifert disk of $F_2$, see Figure 4; or
 
 \newpage
 
\begin{figure}[h]
	\centering
		\scalebox{0.5}{\includegraphics{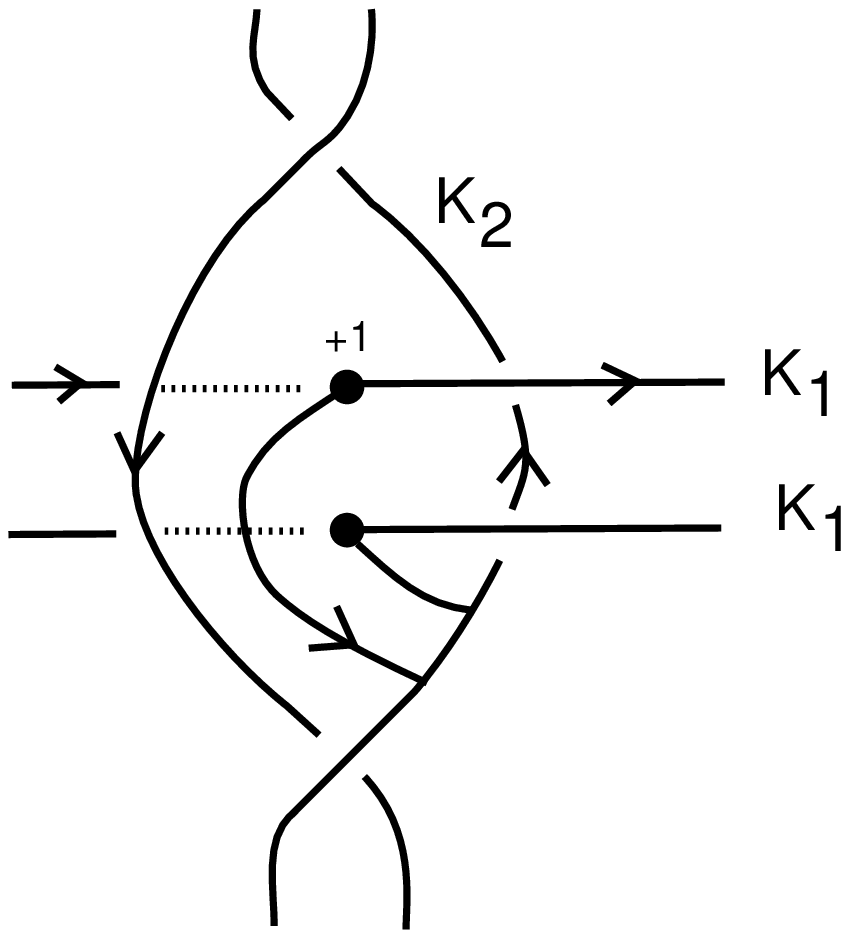}}
	\label{fig:Figure 7}
	\caption*{Figure 4}
\end{figure}

 (3.2.2)  reaches a boundary component of a twist band attached to this Seifert disk of $F_2$, see Figure 5.
 
\begin{figure}[h]
	\centering
		\scalebox{0.5}{\includegraphics{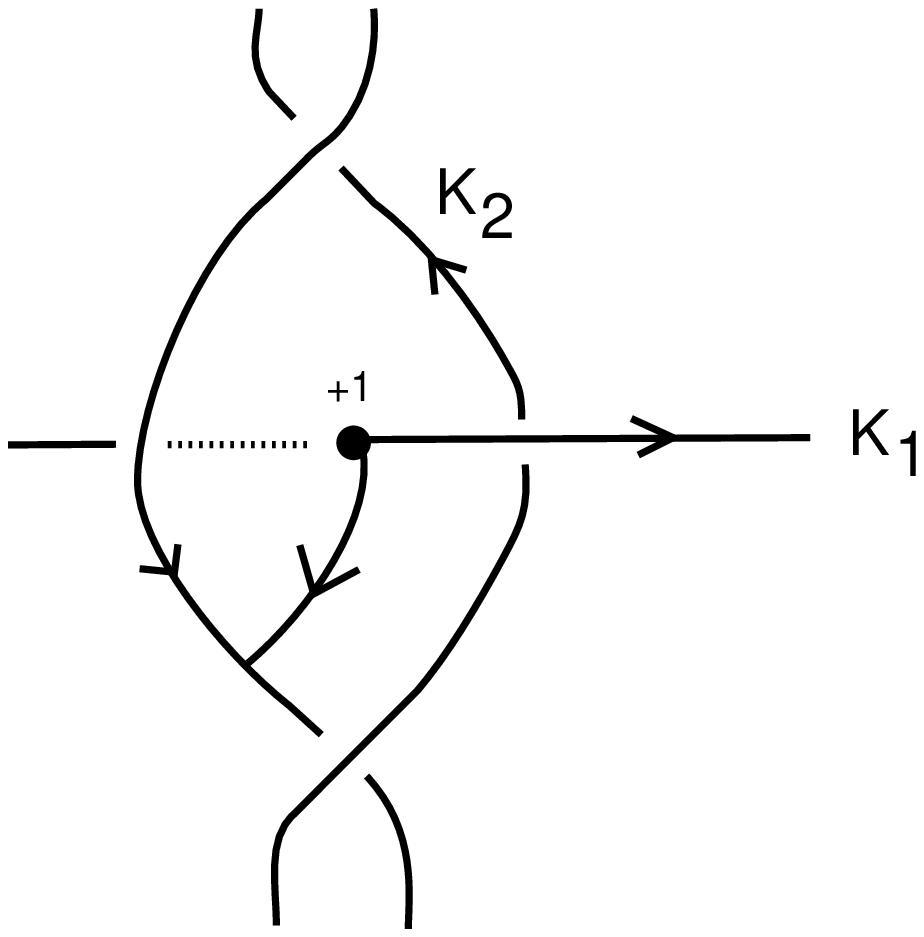}}
	\label{fig:Figure 8}
	\caption*{Figure 5}
\end{figure}

In the case of (3.2.1), check the orientation of the arc of intersection of $F_1 \cdot F_2$ encountered to see if it is possible to follow the orientation.  If not ( this means that the point of intersection for this arc is also labeled with $+1$), then skip and continue on $K_2$ and go back to (3.2), see Figure 6. 

\begin{figure}[h]
	\centering
		\scalebox{0.5}{\includegraphics{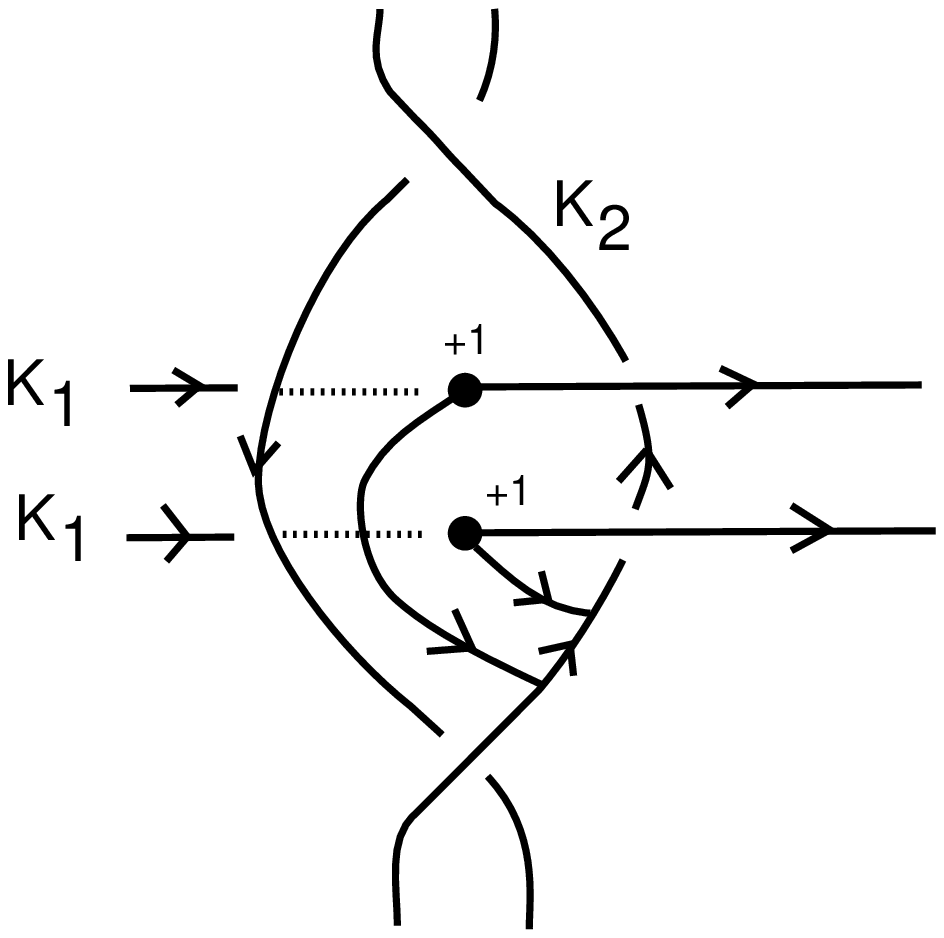}}
	\label{fig:Figure 9}
	\caption*{Figure 6}
\end{figure}

\newpage 

Otherwise follow the orientation of the intersection arc to reach to the point of intersection of $K_1$ with this Seifert disk of $F_2$, see Figure 7.

\begin{figure}[h]
	\centering
		\scalebox{0.5}{\includegraphics{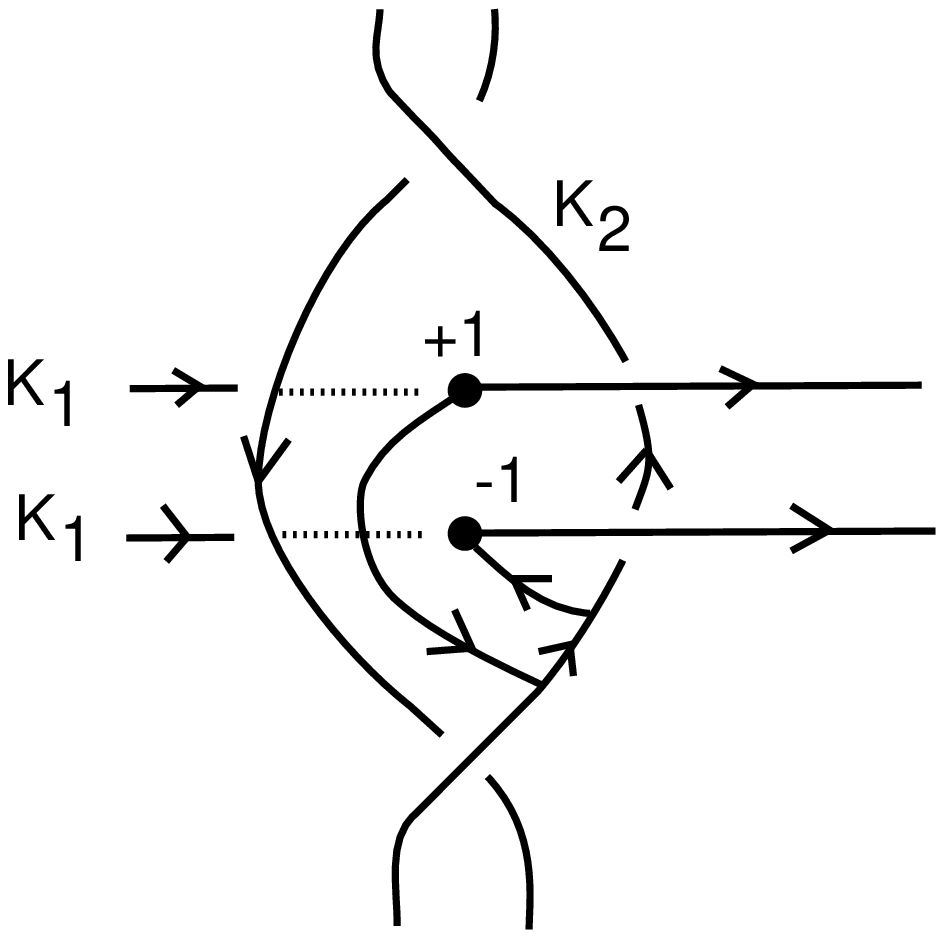}}
	\label{fig:Figure 10}
	\caption*{Figure 7}
\end{figure}

The point of intersection  must be labeled $-1$ , so  

\phantom{.}  

(3.2.1.1) check if it is the point $p$ started initially.

\phantom{.}

If no, go back to (1), and if yes, a simple loop is obtained; then check to see if there are other points of intersection of $K_1$ and Seifert disks of $F_2$ left undone, and if yes, choose any one of such points  and go back to (1) or (3) depending on the label of the point is $-1$ or $+1$ respectively; otherwise all the points of intersection of $K_1$ and Seifert disks of $F_2$ have been accounted for, so $\partial C_{12}$ is obtained and algorithm stops.
  
 \phantom{.} 
 
In the case of (3.2.2), follow the boundary component encountered to reach to a Seifert disk of $F_2$, which may be the one started with initially.  Traveling along $K_2$ in the boundary of this Seifert disk of $F_2$.  Then it either 

\phantom{.} 

(3.2.2.1) meets the boundary component of another twist band attached to this Seifert disk of $F_2$, and if so go back to (3.2.2), see Figure 8; or 

\begin{figure}[h]
	\centering
		\scalebox{0.4}{\includegraphics{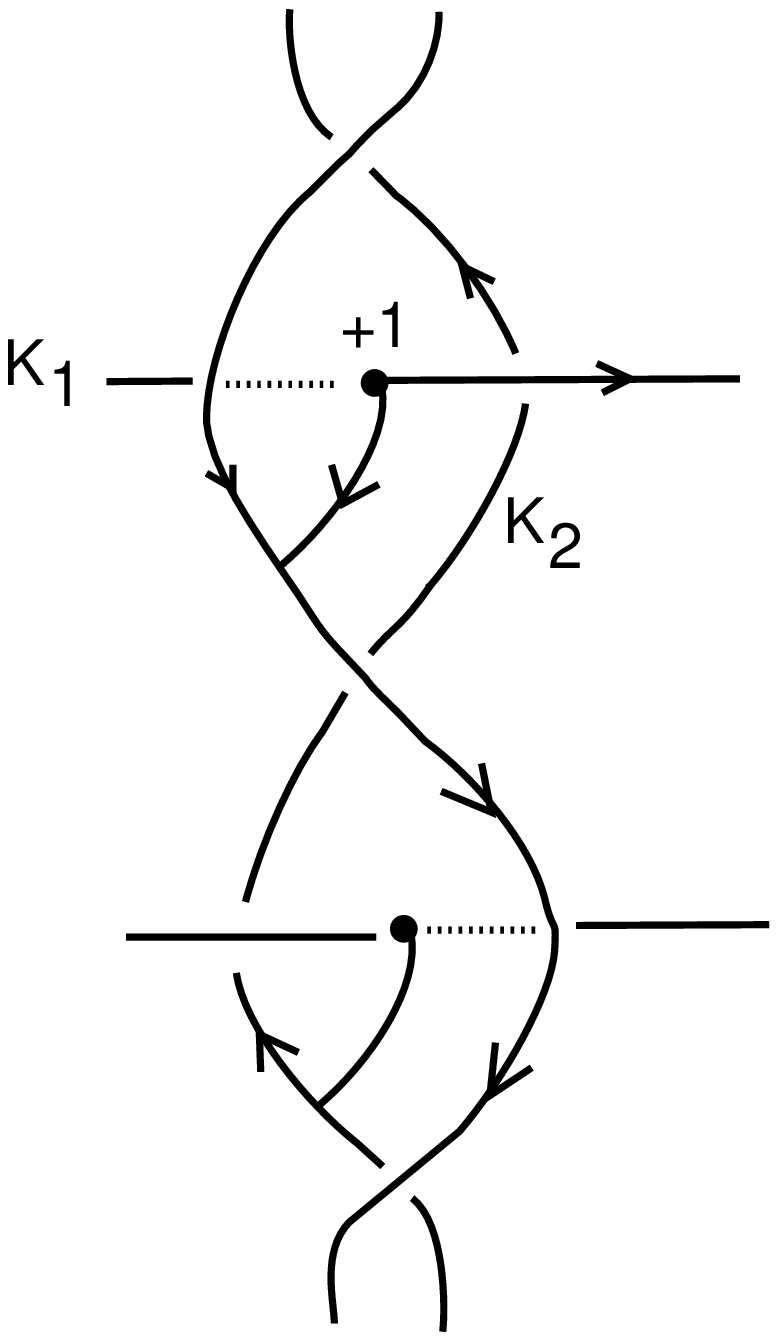}}
	\label{fig:Figure 11}
	\caption*{Figure 8}
\end{figure}

(3.2.2.2) meets an arc of intersection whose orientation cannot be followed, this is equivalent to the case that the point of intersection for this arc is labeled with +1 also, and if so then skip and continue to follow the orientation of $K_2$, i.e. go back to (3.2), see Figure 9; or 

\begin{figure}[h]
	\centering
		\scalebox{0.4}{\includegraphics{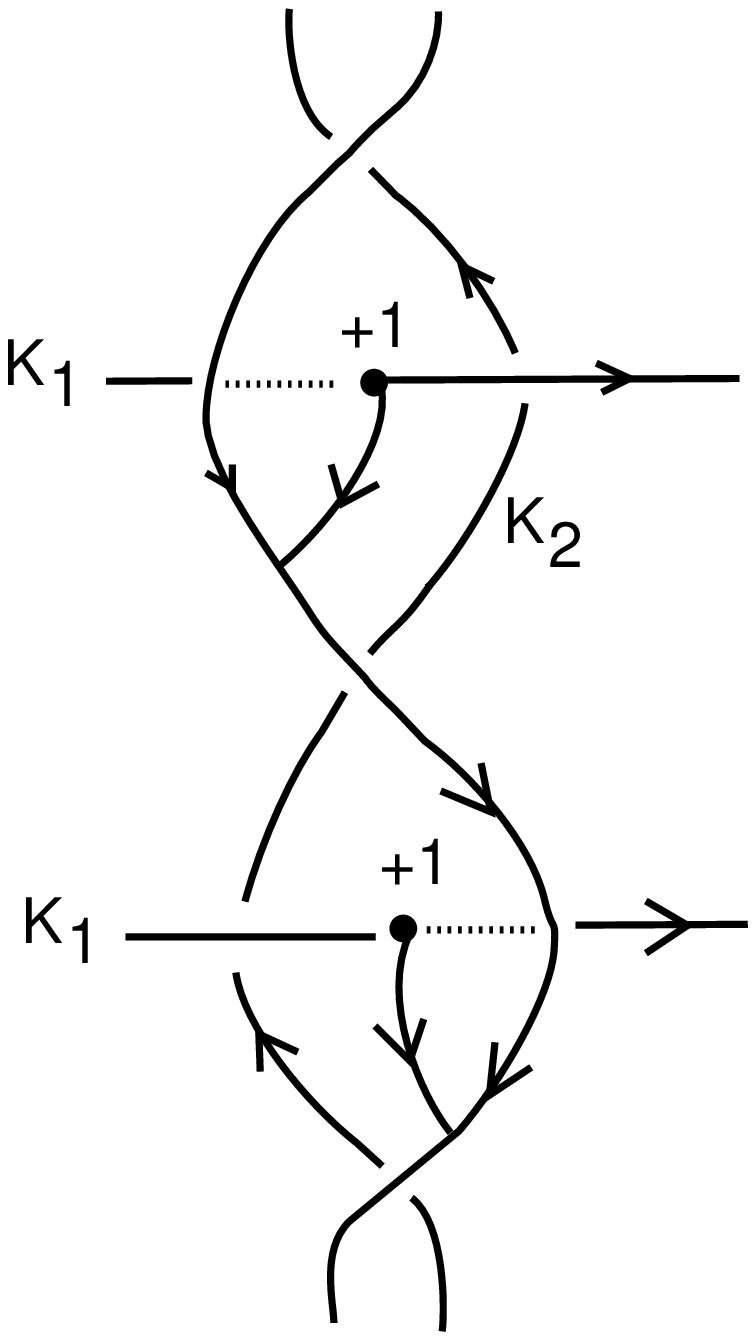}}
	\label{fig:Figure 12}
	\caption*{Figure 9}
\end{figure}

 \newpage

(3.2.2.3) meets an arc of intersection whose orientation can be followed, and this is equivalent to the case that the point of intersection for this arc is labeled with -1.  Then go to (3.2.2.1) to check if the point is the point $p$ started initially, and proceeds accordingly.  See Figure 10

\begin{figure}[h]
	\centering
		\scalebox{0.4}{\includegraphics{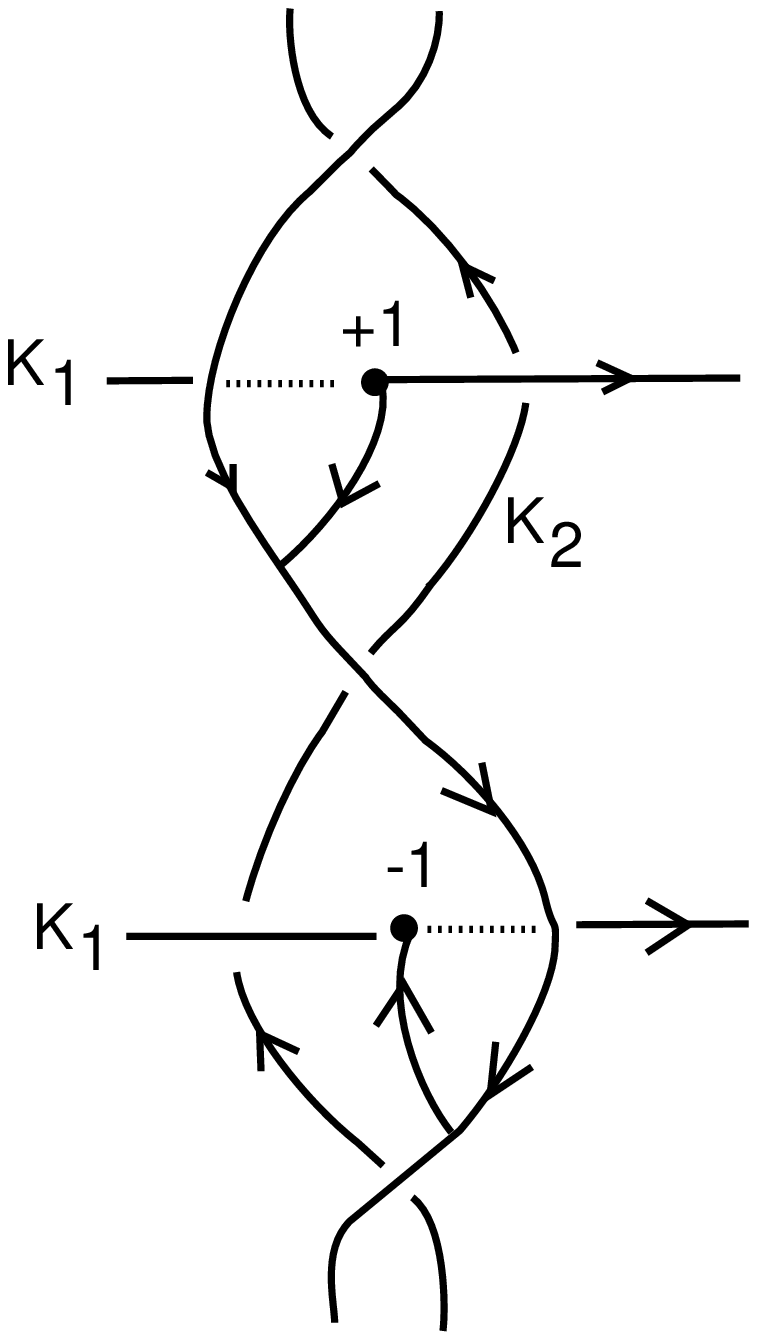}}
	\label{fig:Figure 13}
	\caption*{Figure 10}
\end{figure}

\emph{Remark.}  One can choose the starting point $p$ to be a point of intersection of $K_1$ with a  Seifert disk of $F_2$ labeled with $+1$ instead of $-1$, but then the procedure changes accordingly.  In general, to obtain $\partial C_{12}$ say, first find all the curves of intersection of $F_1 \cdot F_2$ (in this order) and start with any one such curve and following its orientation and the orientations of $L$ and other curves of intersection, until all the curves of intersection have been encountered.

\phantom{.} 

\emph{Example 1.} Let $L = K_1 \cup K_2 \cup K_3$ be the Borromean rings with spanning surfaces $F_1, F_2, F_3$ as depicted in the figure given below.

\begin{figure}[h]
	\centering
		\scalebox{0.4}{\includegraphics{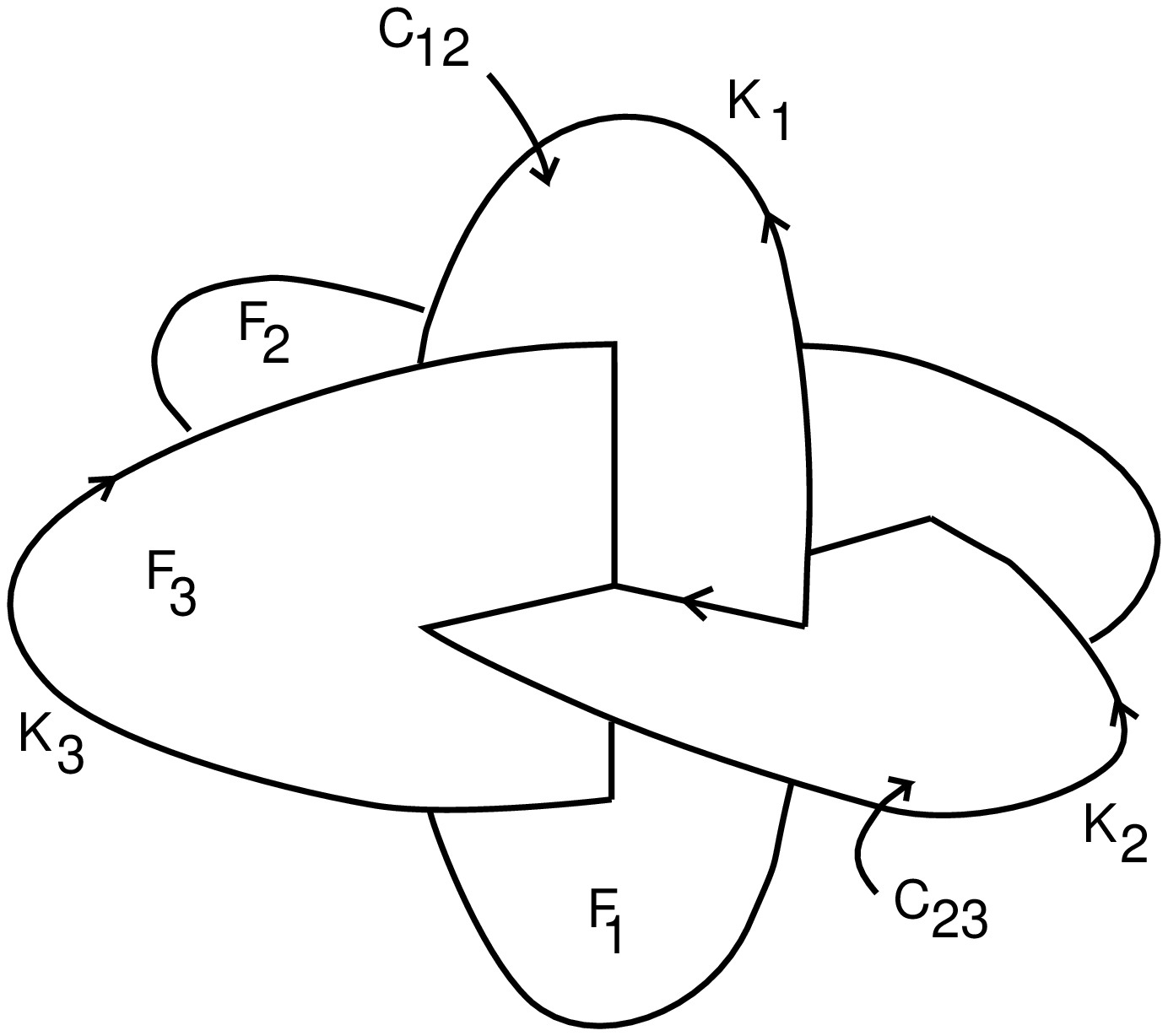}}
	\label{fig:Figure 14}
\end{figure}
Clearly the third-order linking number is 

\begin{eqnarray*}
\# \Big( |T_1| \cap |F_1| \cap|C_{23}| \Big) + \# \Big(|T_1| \cap |C_{12}| \cap |F_3| \Big)
&=& lk(K_1,\partial C_{23}) + \# \Big(|T_1| \cap |C_{12}| \cap |F_3| \Big)\\
&=& 1 + 0\\
&=& 1.  
\end{eqnarray*}

\phantom{.}  

\emph{Example 2.} Consider the link $L = K_1 \cup K_2 \cup K_3$  as depicted in the figure given below

\begin{figure}[h]
	\centering
		\scalebox{0.4}{\includegraphics{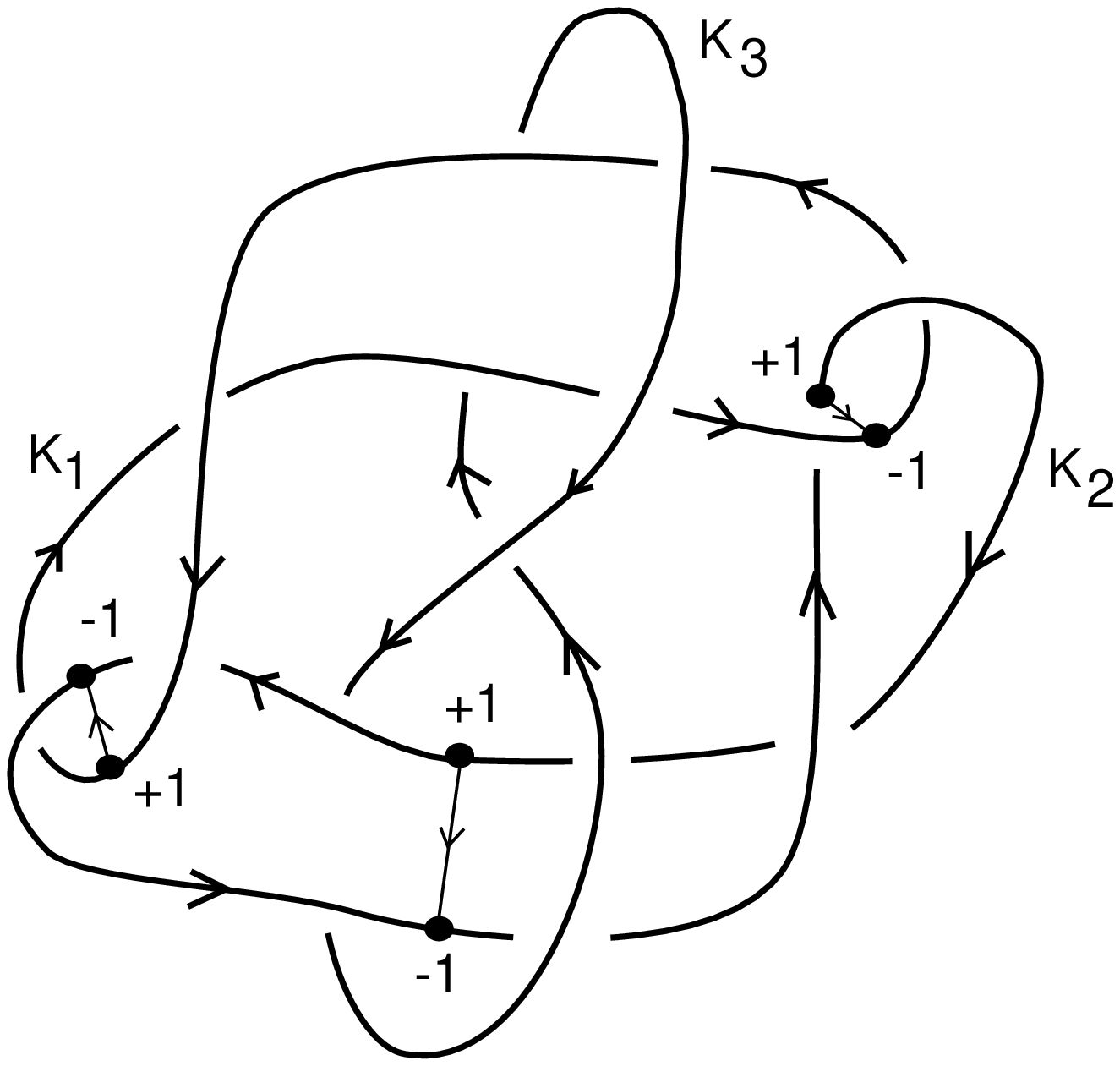}}
	\label{fig:Figure 16}
\end{figure}
Here again the Seiert surface $F_i$ for $K_i$ is a disk, and one computes

$\# \Big( |T_1| \cap |F_1| \cap|C_{23}| \Big) + \# \Big(|T_1| \cap |C_{12}| \cap |F_3| \Big)
= lk(K_1,\partial C_{23}) = -1$, see Figure 11;

\begin{figure}[h]
	\centering
		\scalebox{0.4}{\includegraphics{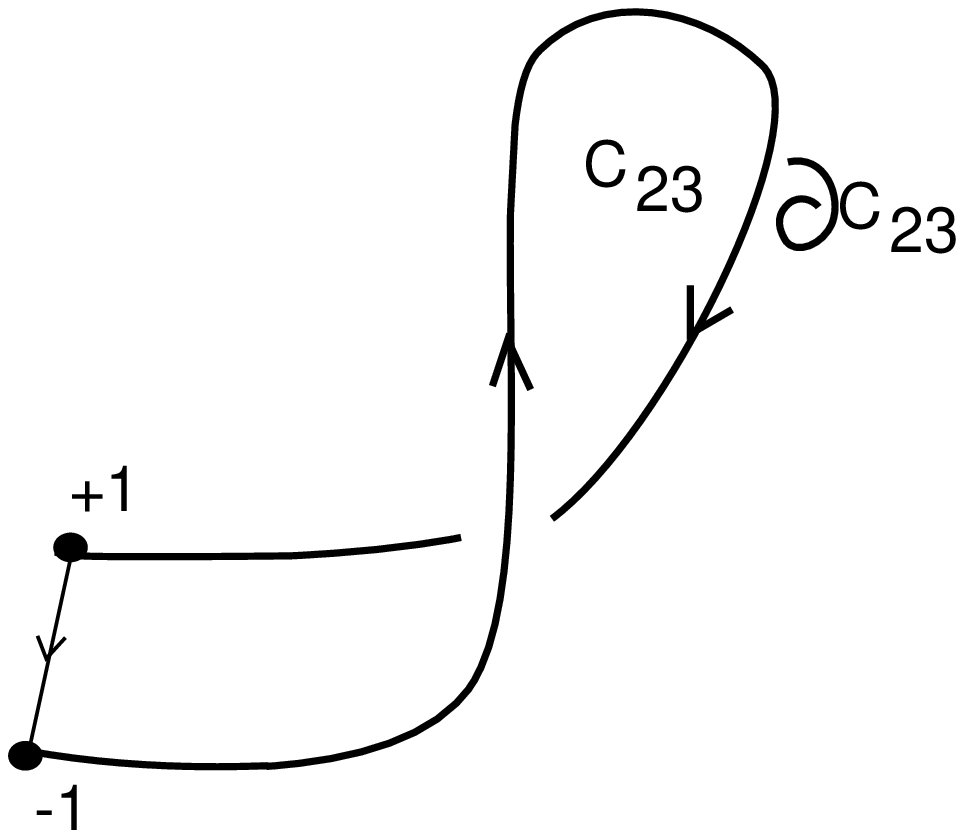}}
	\label{fig:Figure 17}
	\caption*{Figure 11}
\end{figure}

\newpage
and $lk(K_1,\partial C_{23}) + \# \Big(|T_1| \cap |C_{12}| \cap |F_3| \Big) = -2$, see Figure 12. 

\begin{figure}[h]
	\centering
		\scalebox{0.4}{\includegraphics{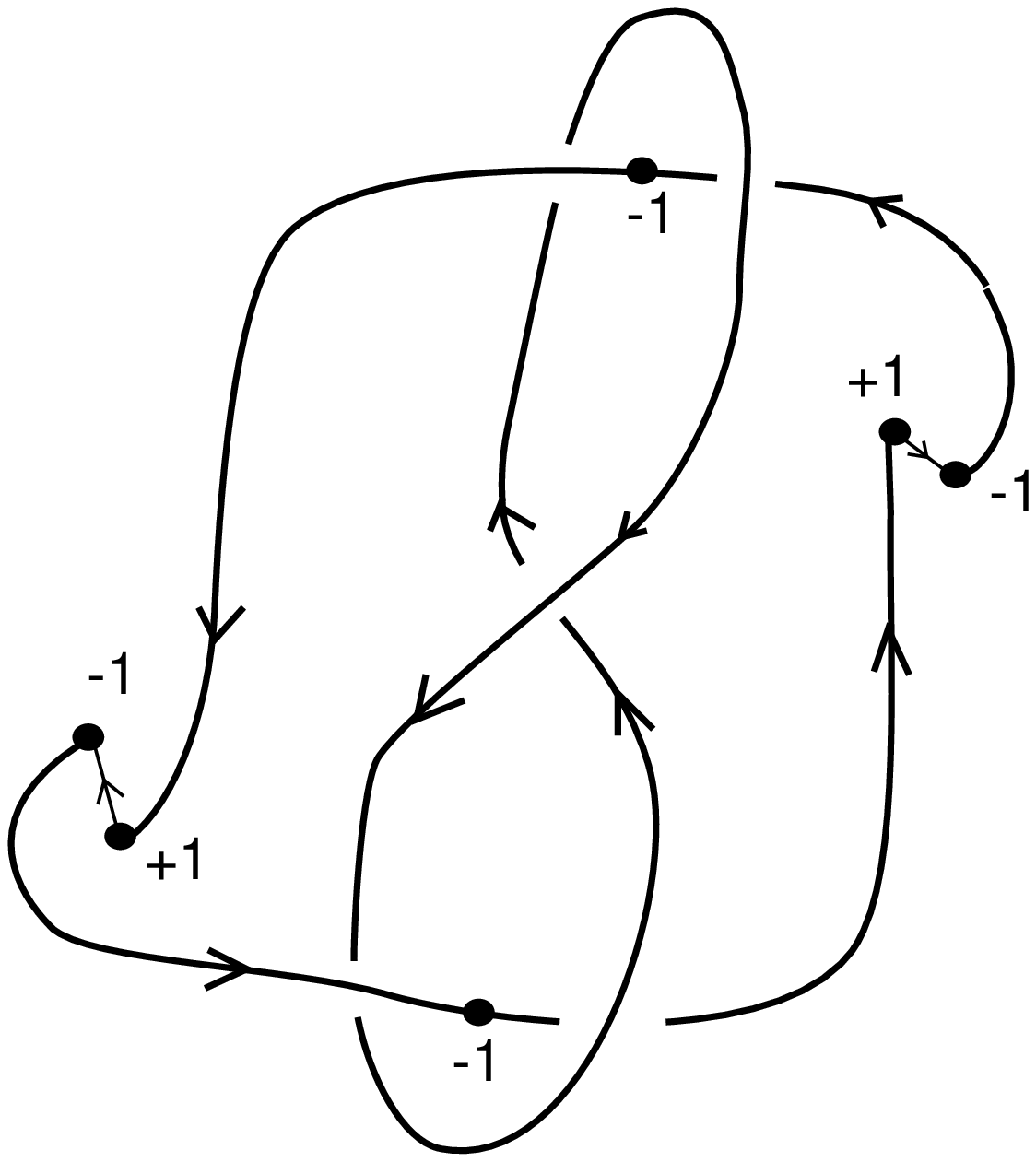}}
	\label{fig:Figure 18}
	\caption*{Figure 12}
\end{figure}

So the third-order linking number is $-3$, which indicates that the components of $L$ are "more linked" than the 
Borromean rings.

\section{Appendix}

The intersection product has the following combinatorial and geometric topology interpretation, which is illustrated for the case $n=3$, $\sigma$ a 2-simplex, and $\tau$ a 1-simplex, see the figure given below. 

\begin{figure}[h]
	\centering
		\scalebox{0.4}{\includegraphics{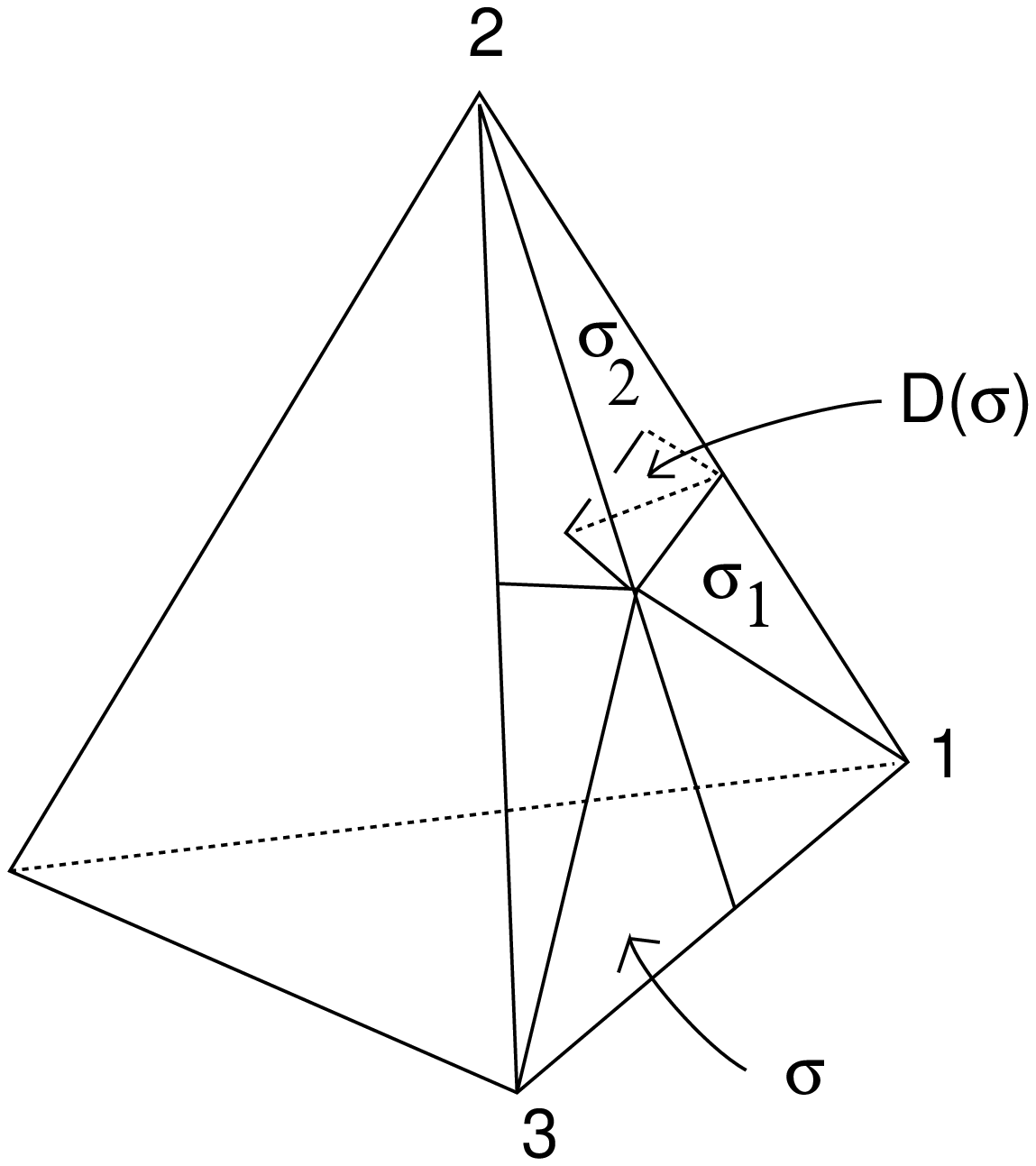}}
	\label{fig:Figure 15}
\end{figure}

Then $\mu_\tau \phantom{.} \epsilon \phantom{.} C^1(K)$, and $D(\tau)\phantom{.} \epsilon \phantom{.} C_2(K^*)$, and by the topological definition of cap product we have 

\begin{eqnarray*}
\sigma \cdot D(\tau) &=& sd_\#(\sigma) \cap \theta^\#(u_\tau) \\
&=& [sd_\#(\sigma) \lambda_1,\theta^\#(u_\tau)] (sd_\#(\sigma) \rho_1),
\end{eqnarray*}  
where $\lambda_1$ is the  front first ace and $\rho_1$ is the back first face.  Let $sd_\#(\sigma)  = \sum_{i=1}^6 \sigma_i$, so 

\begin{eqnarray*}
\sigma \cdot D(\tau) &=& \sum_{i=1}^6 (\theta^\#(\mu_\tau) ( \sigma_i \lambda_1))(\sigma_i \rho_1)\\
&=& \sum_{i=1}^6 \mu_\tau(\theta_\# ( \sigma_i \lambda_1))\\
&=& \sigma_1 \rho_1,
\end{eqnarray*}
since   
\begin{equation} 
 \mu_\tau(\theta_\# ( \sigma_i \lambda_1))= \left\{
  \begin{array}{cl}
  \mu_\tau(\tau) = 1 &\mathrm{for}\quad i = 1\\
 0 &\mathrm{otherwise,}  
  \end{array} \right. 
  \end{equation}
Notice the sense of direction of the 1-simplex $\sigma_i \rho_1$  of $\sigma \cdot D(\tau)$.

\newpage


\begin{thebibliography}{XXX}

\bibitem {cochran} Tim Cochran, Derivatives of links: Milnor's concordance invariants and Massey's products. Mem. Amer. Math. Soc. 84 (1990), no. 427.

\bibitem {fenn} R. A. Fenn, Techniques of Geometric Topology,  London Math. Soc., Lecture Note Series 57, Cambridge University Press (1983).

\bibitem{greenberg} Greenberg, \textsl{Algebraic Topology}, Perseus Publishing, Cambridge, Massachusetts, 1981.

\bibitem {hain 85}  R. Hain. Iterated integrals, intersection theory and link groups, {Topology} 24 (1985)
45--66.  Erratum, \textsl{Topology} 25 (1986) 585--586.

\bibitem{hilton} P. Hilton and S. Wylie, \textsl{Homology Theory}, Cambridge University Press, 1962.

\bibitem {lefschetz}  S. Lefschetz, \textsl{Topology}, Chelsea Publishing Company, 1956.

\bibitem {massey 58} W. S. Massey,  Some higher order cohomology operations, \textsl{Symp. Intern. de Topologia Algebraica} (1958) 145--154.

\bibitem {massey 60} W. S. Massey,  Higher order linking numbers, Proc. Conf. on Algebraic Topology
at UIC (1969), 174--205.

\bibitem {mellor} Blake Mellor and Paul Melvin,  A geometric interpretation of Milnor's triple linking numbers, Algebr. Geom. Topol. 3 (2003), 557--568.

\bibitem{porter} R. Porter, Milnor's $\bar \mu$-invariant and Massey products, Trans. AMS, vol 257, No 1, January 1980.

\bibitem{rolfsen} D. Rolfsen, \textsl{Knots and links},  Mathematical Lecture Series, Publish or Perish, Inc., 1990.

\bibitem {stein} D. Stein, Massey products in the cohomology of groups with applications to link theory, Ph.D. thesis, Brandeis University, 1986.

\bibitem {turaev} V. Turaev, The Milnor invariants and Massey's products,  Zap. Nanc. Sem. LOMI Steklov 66, (1976), 189 --210

\bibitem {whitehead} G. W. Whitehead,  \textsl{Elements of Homotopy Theory}, Springer-Verlag, 1978.


\end{thebibliography}
\end{document}